\providecommand{\U}[1]{\protect\rule{.1in}{.1in}}
\numberwithin{equation}{section}
\newtheorem{example}{Example}[section]
\newtheorem{proposition}{Proposition}[section]
\newtheorem{theorem}{Theorem}[section]
\newenvironment{breakablealgorithm}
{
	\begin{center}
		\refstepcounter{algorithm}
		\hrule height.8pt depth0pt \kern2pt
		\renewcommand{\caption}[2][\relax]{
			{\raggedright\textbf{Algorithm~\thealgorithm} ##2\par}%
			\ifx\relax##1\relax 
			\addcontentsline{loa}{algorithm}{\protect\numberline{\thealgorithm}##2}%
			\else 
			\addcontentsline{loa}{algorithm}{\protect\numberline{\thealgorithm}##1}%
			\fi
			\kern2pt\hrule\kern2pt
		}
	}{
		\kern2pt\hrule\relax
	\end{center}
}
\def\argmin{\mbox{argmin}}
\let\pdfoutput=\undefined\fi
\begin{document}
\pagestyle{myheadings}

\begin{center}
{\LARGE \textbf{Gl-QFOM and Gl-QGMRES: two efficient algorithms for quaternion linear systems with multiple right-hand sides}}\footnote{This work was funded by National Natural Science Foundation of China [grant number
11971294], the Hainan Provincial Natural Science Foundation of China [grant numbers 122QN214 and
122MS001], the Academic Programs project of Hainan University [grant numbers KYQD(ZR)-211151 and KYQD(ZR)-21119].
\par
$^{\dagger}$ Corresponding author. \par  Email address: wqw@t.shu.edu.cn, (Q.W. Wang); tli@hainanu.edu.cn (T. Li); 995272@hainanu.edu.cn (X.F. Zhang) }

\bigskip

{ \textbf{Tao Li$^{a}$,  Qing-Wen Wang$^{b,\dagger}$ Xin-Fang Zhang$^{a}$ }}
{\small
\vspace{0.25cm}

$a.$ Department of Mathematics, Hainan University, Haikou 570228, P. R. China.\\

$b.$ Department of Mathematics, Shanghai University, Shanghai 200444, P. R. China}

\end{center}

\begin{quotation}
\noindent\textbf{Abstract:}
In this paper, we propose the global quaternion full orthogonalization (Gl-QFOM) and global quaternion generalized minimum residual (Gl-QGMRES) methods, which are built upon global orthogonal and oblique projections onto a quaternion matrix Krylov subspace, for solving quaternion linear systems with multiple right-hand sides. We first develop the global quaternion Arnoldi procedure to preserve the quaternion Hessenberg form during the iterations. We then establish the convergence analysis of the proposed methods, and show how to apply them to solve the Sylvester quaternion matrix equation. Numerical examples are provided to illustrate the effectiveness of our methods compared with the traditional Gl-FOM and Gl-GMRES iterations for the real representations of the original linear systems.

\vspace{3mm}

\noindent\textbf{Keywords:} Quaternion linear
systems; Multiple right-hand sides; Global quaternion Arnoldi procedure; Global quaternion FOM method; Global quaternion GMRES method\newline%
\noindent\textbf{2020 AMS Subject Classifications:\ }{\small 15B33; 65F10; 94A08 }\newline
\end{quotation}

\section{\textbf{Introduction}}

 Let $\mathbb{R}$, $\mathbb{Q}^{n}$ and $\mathbb{Q}^{n\times m}$ denote the real number field, the collections of all $n$-dimensional vectors and all $n\times m$  matrices over the quaternion skew-field \cite{Hamilton,Zhang1,Rodman}
$$\mathbb{Q}=\{q_0+q_1\mathbf{i}+q_2\mathbf{j}+q_3\mathbf{k}|\mathbf{i}^2=\mathbf{j}^2=\mathbf{k}^2=\mathbf{i}\mathbf{j}\mathbf{k}=-1,q_0,q_1,q_2,q_3\in\mathbb{R}\},$$ respectively. 
Note that $\mathbb{Q}$ is an associative but non-commutative four-dimensional algebra over $\mathbb{R}$. The real counterpart of a quaternion matrix 
$\mathbf{W}=W_0+W_1\mathbf{i}+W_2\mathbf{j}+W_3\mathbf{k}\in\mathbb{Q}^{n\times m}$ with $W_0,W_1,W_2,W_3\in \mathbb{R}^{n\times m}$, defined by the linear homeomorphic mapping $\mathcal{R}(\cdot)$, is of the form
\begin{equation}\label{1-0}
\mathcal{R}(\mathbf{W}):=\begin{bmatrix}
W_0&-W_1&-W_2&-W_3\\
W_1&W_0&-W_3&W_2\\
W_2&W_3&W_0&-W_1\\
W_3&-W_2&W_1&W_0\\
\end{bmatrix}\in \mathbb{R}^{4n\times 4m},
\end{equation}
and its first block column is of the form
$$
\mathcal{R}(\mathbf{W})_{c}:=\begin{bmatrix}
W_0^T&W_1^T&W_2^T&W_3^T
\end{bmatrix}^T\in \mathbb{R}^{4n\times m}.
$$
Denote the $j$-th column of $\mathbf{W}$ by $\mathbf{W}_{:,j}, j=1,\cdots,m$. 

In this paper, we consider a few quaternion linear systems with the same coefficient matrix but different right-hand sides, as follows
\begin{equation}\label{1-1}
\mathbf{A}\mathbf{X}=\mathbf{B}, 
\end{equation}
where the matrix $\mathbf{A}\in\mathbb{Q}^{n\times n}$ is invertible and non-Hermitian, $\mathbf{B}=[\mathbf{B}_{:,1},\mathbf{B}_{:,2},\cdots,\mathbf{B}_{:,m}]$ and $\mathbf{X}=[\mathbf{X}_{:,1},\mathbf{X}_{:,2},\cdots,\mathbf{X}_{:,m}]$ are $m\times n$ rectangular quaternion matrices with the moderate size $m<<n$.
The matrix equation \eqref{1-1} has attracted a great deal of attention because it plays an increasingly important role in statistics \cite{Dyson,Ginzberg}, quantum physics\cite{Adler}, tensor computations \cite{Chu0,Chu1,Chu2}, computer science\cite{Wang1,He0,Yuan}, especially in colour image processing\cite{Jia0,He1,Chen00,Ji}.
For example, if the blurring matrix $\mathbf{A}$ on a colour image, which is represented by a pure quaternion matrix $\mathbf{X}$, such that the observed image is $\mathbf{B}$. The above process can be mathematically written as Eq.\eqref{1-1}. From a practical viewpoint, it is costly to compute its exact solution if the dimension is large. This paper aims to develop efficient structure-preserving methods, including the global quaternion full orthogonalization (Gl-QFOM) and global quaternion generalized minimum residual (Gl-QGMRES) methods,  for solving the original quaternion matrix equation. The methods save three-quarters of the theoretical costs compared with the traditional Gl-FOM and Gl-GMRES methods using the real counterpart.

Almost existing iterative methods for solving Eq.\eqref{1-1} are built upon the real or complex counterpart such that the dimension is four or two times the original. More precisely, by the linear homeomorphic mapping $\mathcal{R}(\cdot)$, Eq.\eqref{1-1} can be rewritten equivalently as the following real matrix equation
\begin{equation}\label{1-2}
\mathcal{R}(\mathbf{A})\mathcal{R}(\mathbf{X})=\mathcal{R}(\mathbf{B})\in\mathbb{R}^{4n\times 4m}.
\end{equation}
It is frequently solved by some block or global Krylov subspace methods over $\mathbb{R}$, such as the block biconjugate gradient method \cite{Leary}, the block generalized minimal residual method \cite{Simoncini}, the block quasi-minimum residual method \cite{Freund}, the global full orthogonalization method (Gl-FOM) and the global generalized minimal residual (Gl-GMRES) method \cite{Jbilou,Bouyouli}. Especially, Gl-FOM and Gl-GMRES, based on the global real Arnoldi method (Gl-RAM), were commonly utilized in the literature since they converge much faster than some existing solvers. However, the arithmetic operations and storage required by the two algorithms for Eq.\eqref{1-1} become prohibitively large with the dimension increasing. Moreover, an upper Hessenberg matrix generated by the mentioned algorithms does not inherit the JRS-symmetry of the real counterpart. To the best of our knowledge, only Jia \cite{Jia4,Chen0} presented the quaternion generalized minimal residual (QGMRES), and Li \cite{Li0} introduced the quaternion full orthogonalization (QFOM) methods for solving a system of linear equations, i.e., Eq.\eqref{1-1} with $n=1.$ Evidently, neither of these two methods can solve the original matrix equation. Inspired by these issues, we attempt to derive the structure-preserving Gl-QFOM and Gl-QGMRES methods, which inherit the algebraic symmetry of $\mathcal{R}(\mathbf{A})$ during the iterations, for solving \eqref{1-1}. The proposed methods are built upon the global quaternion Arnoldi process, which requires fewer arithmetic operations and storage than Gl-RAM. When solving the quaternion linear systems, such methods reduce to the competitive QGMRES and QFOM methods. We also apply the proposed methods for solving the well-known Sylvester quaternion matrix equation $\mathbf{A}\mathbf{X}+\mathbf{X}\mathbf{B}=\mathbf{C}$.

The remainder of this paper is as follows. In Section \ref{section-2}, we briefly recall some results associated with quaternion matrices. In Section \ref{section-3}, we develop the structure-preserving global quaternion Arnoldi method (Gl-QAM) generated by a quaternion matrix Krylov subspace. On this basis, we propose a modified global quaternion Arnoldi process, which is comparably robust to Gl-QAM. In Section \ref{section-4}, we introduce the Gl-QFOM and Gl-QGMRES methods for solving Eq.\eqref{1-1}, and then apply them for solving the Sylvester quaternion matrix equation. The convergence of our methods is also established. In Section \ref{section-5}, we record some numerical results on the randomly generated data and colour image deblurring problems, including their
comparisons with some existing methods, to show the effectiveness of the proposed methods. At last, we conclude this paper by drawing some remarks in Section \ref{section-6}.

\section{Preliminaries}\label{section-2}
In this section, we introduce some relevant preliminaries to quaternion matrices, which are useful in the later sections.
The conjugate of a quaternion $\bm{q}$ is defined by $\overline{\bm{q}}=q_0-q_1\mathbf{i}-q_2\mathbf{j}-q_3\mathbf{k}$, and its magnitude is of the form $|\bm{q}|=\sqrt{\bm{q}\overline{\bm{q}}}=\sqrt{q_0^2+q_1^2+q_2^2+q_3^2}.$ Each nonzero quaternion $\bm{q}$ has a unique inverse $\overline{\bm{q}}/{|\bm{q}|^2}$, denoted by $\bm{q}^{-1}$. The conjugate transpose of a matrix $\mathbf{X}=X_0+X_1\mathbf{i}+X_2\mathbf{j}+X_3\mathbf{k}\in \mathbb{Q}^{n\times m}$, defined by $\mathbf{X}^*=X_0^T-X_1^T\mathbf{i}-X_2^T\mathbf{j}-X_3^T\mathbf{k}$, is an $m\times n$ quaternion matrix.
If a square quaternion matrix $\mathbf{X}$ yields $\mathbf{X}=\mathbf{X}^*$, i.e., $X_0=X_0^T, X_i=-X_i^T, i=1,2,3$,  then it is said to be Hermitian; otherwise, non-Hermitian.
The inner product of two quaternion matrices $\mathbf{X},\mathbf{Y}\in \mathbb{Q}^{n\times m}$, defined as $\langle\mathbf{X},\mathbf{Y}\rangle=\mbox{tr}(\mathbf{Y}^*\mathbf{X})$, is a quaternion scalar, in which $\mbox{tr}(\mathbf{Z})$ stands for the trace of the square matrix $\mathbf{Z}$, and they are said to be orthogonal if $\langle\mathbf{X},\mathbf{Y}\rangle=0$. The induced norm of $\mathbf{X}$ is the well-known Frobenius norm defined by $\|\mathbf{X}\|=\mbox{tr}(\mathbf{X}^*\mathbf{X})$. 
As seen in \cite{Wei0, Jia1, Jia2, Jia3}, all real counterparts of a quaternion matrix are permutationally equivalent and surely JRS-symmetric matrices. If $M,N,P$ are JRS-symmetric matrices and $\alpha,\beta\in\mathbb{R}$, then so is $\alpha M+\beta NP$. Denote the inverse mapping of $\mathcal{R}$ to the JRS-symmetric matrix by $\mathcal{R}^{-1}(\mathcal{R}(\mathbf{X}))=\mathbf{X}.$ Moreover,  $\mathcal{R}(\mathbf{X})\in\mathbb{R}^{4n\times 4n}$ is called an upper JRS-Hessenberg matrix, i.e, $\mathbf{X}\in\mathbb{Q}^{n\times n}$ is called an upper Hessenberg quaternion matrix, if ${X}_0\in\mathbb{R}^{n\times n}$ is of upper Hessenberg form, and ${X}_1,{X}_2,{X}_3\in\mathbb{R}^{n\times n}$ are of upper triangular forms. 
Denote the right-hand-side linear combination of quaternion matrices $\mathbf{V}_1,\mathbf{V}_2,\cdots,\mathbf{V}_k\in\mathbb{Q}^{n\times m}$ by $\mathbf{V}_1\bm{\alpha}_1+\mathbf{V}_2\bm{\alpha}_2+\cdots+\mathbf{V}_k\bm{\alpha}_k$, in which $\bm{\alpha}_1,\bm{\alpha}_2,\cdots,\bm{\alpha}_k$ are quaternion scalars.  Throughout the paper, we focus on the right-hand-side linear combination of quaternion matrices since its properties are similar to the linear combination of complex matrices. We call that $\mathbf{V}_1,\mathbf{V}_2,\cdots,\mathbf{V}_k$ are linearly dependent if there exist $\bm{\alpha}_1,\bm{\alpha}_2,\cdots,\bm{\alpha}_k$, not all zero, such that $\mathbf{V}_1\bm{\alpha}_1+\mathbf{V}_2\bm{\alpha}_2+\cdots+\mathbf{V}_k\bm{\alpha}_k=\bm{0},$ conversely, linearly independent. As shown in \cite{Ghiloni}, the collection $\mathbb{Q}^{n\times m}$ is a right quaternionic Hilbert space with the inner product yielding the following properties:
 \begin{itemize}
\item (Right linearity) $\langle\mathbf{X}\bm{\alpha}+\mathbf{Y}\bm{\bm{\beta}},\mathbf{Z} \rangle=\langle\mathbf{X},\mathbf{Z} \rangle\bm{\alpha}+\langle\mathbf{Y},\mathbf{Z} \rangle\bm{\bm{\beta}}$ holds for $\mathbf{X},\mathbf{Y},\mathbf{Z}\in \mathbb{Q}^{n\times m}$ and $\bm{\alpha},\bm{\bm{\beta}}\in \mathbb{Q}$;
\item (Quaternionic hermiticity) $\langle\mathbf{X},\mathbf{Y} \rangle=\overline{\langle\mathbf{Y},\mathbf{X}\rangle}$ holds for $\mathbf{X},\mathbf{Y}\in \mathbb{Q}^{n\times m}$;
\item (Positivity) $\langle\mathbf{X},\mathbf{X} \rangle\geq 0$ holds for $\mathbf{X} \in \mathbb{Q}^{n\times m}$, and $\mathbf{X}=\mathbf{0}$ if and only if $\langle\mathbf{X},\mathbf{X} \rangle=0;$
\item The distance between two quaternion matrices $\mathbf{X}, \mathbf{Y}\in \mathbb{Q}^{n\times m}$ is defined by $$D(\mathbf{X},\mathbf{Y}):=\sqrt{\langle\mathbf{X}-\mathbf{Y},\mathbf{X} -\mathbf{Y}\rangle}.$$
\end{itemize}

Denote an $n\times mk$ quaternion matrix $[\mathbf{V}_1,\mathbf{V}_2,\cdots,\mathbf{V}_k]$ by $\mathbf{\mathcal{V}}_k$ with $\mathbf{V}_i\in\mathbb{Q}^{n\times m}$, $i=1,\cdots, k$. The symbol $*$ denotes the following quaternion matrix-vector product
\begin{equation}\label{2-1}
\mathbf{\mathcal{V}}_k*\bm{\alpha}:=\sum_{i=1}^k\mathbf{{V}}_i\bm{\alpha}_i,
\end{equation}
where $\bm{\alpha}=(\bm{\alpha}_1,\bm{\alpha}_2,\cdots,\bm{\alpha}_k)^T\in\mathbb{Q}^{k}$, and the quaternion matrix-matrix product
\begin{equation}\label{2-2}
\mathbf{\mathcal{V}}_k*\mathbf{W}:=[\mathbf{\mathcal{V}}_k\mathbf{W}_{:,1}, \mathbf{\mathcal{V}}_k\mathbf{W}_{:,2},\cdots,\mathbf{\mathcal{V}}_k\mathbf{W}_{:,k}]
\end{equation}
with $\mathbf{W}\in\mathbb{Q}^{k\times k}$.
From the above definitions, for any vectors $\bm{\alpha},\bm{\beta}\in\mathbb{Q}^{k}$,  the following operations with $*$ are trivial:
\begin{equation}\label{2-3}
\mathbf{\mathcal{V}}_k*(\bm{\alpha}+\bm{\beta})=\mathbf{\mathcal{V}}_k*\bm{\alpha}+\mathbf{\mathcal{V}}_k*\bm{\beta} \quad \mbox{and} \quad (\mathbf{\mathcal{V}}_k*\mathbf{W})*\bm{\alpha}=\mathbf{\mathcal{V}}_k*(\mathbf{W}\bm{\alpha}).
\end{equation}
For $\mathbf{W}\in \mathbb{Q}^{ n\times n}$ and $\mathbf{\mathcal{V}}_k=[\mathbf{V}_1,\mathbf{V}_2,\cdots,\mathbf{V}_k]\in \mathbb{Q}^{n\times mk}$, their $\boxtimes$ product, defined as
\begin{equation}\label{2-4}
\mathbf{\mathcal{V}}_k^*\boxtimes(\mathbf{W}\mathbf{\mathcal{V}}_k)=\mathbf{\mathcal{V}}_k^*\boxtimes[\mathbf{W}\mathbf{{V}}_1,\mathbf{W}\mathbf{{V}}_2,\cdots,\mathbf{W}\mathbf{{V}}_k]=:\begin{bmatrix}
\mbox{tr}(\mathbf{{V}}_1^*\mathbf{W}\mathbf{{V}}_1)&\mbox{tr}(\mathbf{{V}}_1^*\mathbf{W}\mathbf{{V}}_2)&\cdots&\mbox{tr}(\mathbf{{V}}_1^*\mathbf{W}\mathbf{{V}}_k)\\
\mbox{tr}(\mathbf{{V}}_2^*\mathbf{W}\mathbf{{V}}_1)&\mbox{tr}(\mathbf{{V}}_2^*\mathbf{W}\mathbf{{V}}_2)&\cdots&\mbox{tr}(\mathbf{{V}}_2^*\mathbf{W}\mathbf{{V}}_k)\\
\vdots&\vdots&\ddots&\vdots\\
\mbox{tr}(\mathbf{{V}}_k^*\mathbf{W}\mathbf{{V}}_1)&\mbox{tr}(\mathbf{{V}}_k^*\mathbf{W}\mathbf{{V}}_2)&\cdots&\mbox{tr}(\mathbf{{V}}_k^*\mathbf{W}\mathbf{{V}}_k)\\
\end{bmatrix},
\end{equation}
is a $k\times k$ quaternion matrix. Let $\mathbf{V}_j={V}_0^{(j)}+{V}_1^{(j)}\mathbf{i}+{V}_2^{(j)}\mathbf{j}+{V}_3^{(j)}\mathbf{k}$ with $ j=1,\cdots,k,$ and let $$
\mathcal{R}(\mathbf{\mathcal{V}}_k):=\begin{bmatrix}
\overline{\mathbf{\mathcal{V}}}_0&-\overline{\mathbf{\mathcal{V}}}_1&-\overline{\mathbf{\mathcal{V}}}_2&-\overline{\mathbf{\mathcal{V}}}_3\\
\overline{\mathbf{\mathcal{V}}}_1&\overline{\mathbf{\mathcal{V}}}_0&-\overline{\mathbf{\mathcal{V}}}_3&\overline{\mathbf{\mathcal{V}}}_2\\
\overline{\mathbf{\mathcal{V}}}_2&\overline{\mathbf{\mathcal{V}}}_3&\overline{\mathbf{\mathcal{V}}}_0&-\overline{\mathbf{\mathcal{V}}}_1\\
\overline{\mathbf{\mathcal{V}}}_3&-\overline{\mathbf{\mathcal{V}}}_2&\overline{\mathbf{\mathcal{V}}}_1&\overline{\mathbf{\mathcal{V}}}_0\\
\end{bmatrix}$$
with $\overline{\mathbf{\mathcal{V}}}_i=[{V}_i^{(1)}, {V}_i^{(2)}, \cdots,  {V}_i^{(k)}]\in \mathbb{R}^{n\times mk}$, $i=0,1,2,3$. Denote by $\hat{\overline{\mathbf{\mathcal{V}}}}_t\in \mathbb{R}^{4n\times mk}$, $t=1,2,3,4$, the $t$-th block column matrices of $\mathcal{R}(\mathbf{\mathcal{V}}_k)$. Applying the liner homeomorphic mapping $\mathcal{R}(\cdot)$ on both sides of \eqref{2-4}, it follows that
\begin{equation}\label{2-5}
\mathcal{R}(\mathbf{\mathcal{V}}_k)^T\boxtimes(\mathcal{R}(\mathbf{W})\mathcal{R}(\mathbf{\mathcal{V}}_k)):=\begin{bmatrix}
H_0&-H_1&-H_2&-H_3\\
H_1&H_0&-H_3&H_2\\
H_2&H_3&H_0&-H_1\\
H_3&-H_2&H_1&H_0\\
\end{bmatrix}\in\mathbb{R}^{4k\times 4k},
\end{equation}
where
$$
H_{t-1}=(h_{pq})=\hat{\overline{\mathbf{\mathcal{V}}}}_t^T\boxtimes[\mathcal{R}(\mathbf{W})\mathcal{R}(\mathbf{\mathcal{V}}_k)_c]\in \mathbb{R}^{k\times k}
$$
with its entries $h_{pq}=\mbox{tr}((\hat{\overline{\mathbf{\mathcal{V}}}}_t)_p^T\mathcal{R}(\mathbf{W})(\mathcal{R}(\mathbf{\mathcal{V}}_k)_c)_q)$, in which $(\hat{\overline{\mathbf{\mathcal{V}}}}_t)_p\in \mathbb{R}^{4n\times m}$ and $(\mathcal{R}(\mathbf{\mathcal{V}}_k)_c)_q\in \mathbb{R}^{4n\times m}$ are the $p$-th and $q$-th block column matrix of $\hat{\overline{\mathbf{\mathcal{V}}}}_t$ and $\mathcal{R}(\mathbf{\mathcal{V}}_k)_c$, respectively, $p,q=1,\cdots,k.$

From \cite{Jia1,Jia4}, it was shown that two types of orthogonally JRS-symplectic equivalence transformations could preserve the JRS-symmetric structures. One is the generalized symplectic Givens rotation, defined as
\begin{equation}\label{2-6}
G^{(l)}:=\begin{bmatrix}
G^{(l)}_0&-G^{(l)}_1&-G^{(l)}_2&-G^{(l)}_3\\
G^{(l)}_1&G^{(l)}_0&-G^{(l)}_3&G^{(l)}_2\\
G^{(l)}_2&G^{(l)}_3&G^{(l)}_0&-G^{(l)}_1\\
G^{(l)}_3&-G^{(l)}_2&-G^{(l)}_1&G^{(l)}_0\\
\end{bmatrix}\in\mathbb{R}^{4n\times 4n},
\end{equation}
where $$\begin{aligned}&G^{(l)}_0=\begin{bmatrix}
I^{(l-1)\times (l-1)}&0&0\\
0&\alpha_0^{(l)}&0\\
0&0&I^{(n-l)\times (n-l)}
\end{bmatrix},\quad G^{(l)}_1=\begin{bmatrix}
0^{(l-1)\times (l-1)}&0&0\\
0&\alpha_1^{(l)}&0\\
0&0&0^{(n-l)\times (n-l)}
\end{bmatrix}, \\
&G^{(l)}_2=\begin{bmatrix}
0^{(l-1)\times (l-1)}&0&0\\
0&\alpha_2^{(l)}&0\\
0&0&0^{(n-l)\times (n-l)}
\end{bmatrix},\quad G^{(l)}_3=\begin{bmatrix}
0^{(l-1)\times (l-1)}&0&0\\
0&\alpha_3^{(l)}&0\\
0&0&0^{(n-l)\times (n-l)}
\end{bmatrix}
\end{aligned}$$
for $1\leq l\leq n$ are $n\times n$ matrices, and $(\alpha_0^{(l)})^{2}+(\alpha_1^{(l)})^{2}+(\alpha_2^{(l)})^{2}+(\alpha_3^{(l)})^{2}=1$ with $\alpha_i^{(l)}\in[-1,1], 1\leq i\leq 4$. 
From the above results, for any JRS-symmetric matrix, we will prove that it can become an upper JRS-Hessenberg matrix by a JRS-symplectic matrix with orthogonal block column matrices. If the statement holds, then the global quaternion Arnoldi process (Gl-QAP) is surely true.

\section{Global Quaternion Arnoldi Method}\label{section-3}
Our goal in this section is to define the quaternion matrix Krylov subspace, and then give the structure-preserving global quaternion Arnoldi method. This method differs entirely from the classical one over the real/complex number fields since quaternions are multiplicatively noncommutative.  First, we give some practical results which will be used in the sequel.
\begin{theorem}\label{tho-1}
Let $\mathcal{R}(\mathbf{W})\in\mathbb{R}^{4n\times 4n}$ defined in \eqref{1-0} be a JRS-symmetric matrix. Then there exists a JRS-symplectic matrix $\mathcal{R}(\mathbf{V})\in\mathbb{R}^{4n\times 4mk}$ with orthogonal block column matrices such that $\mathcal{R}(\mathbf{V})^T\boxtimes (\mathcal{R}(\mathbf{W})\mathcal{R}(\mathbf{V}))=\overline{H}\in\mathbb{R}^{4k\times 4k}$ is of upper JRS-Hessenberg form.
\end{theorem}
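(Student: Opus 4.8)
The plan is to prove the statement constructively, by generating the columns of $\mathcal{R}(\mathbf{V})$ through a right-handed global quaternion Gram--Schmidt recurrence and then reading off the asserted structure via the identity \eqref{2-5}. Since \eqref{2-5} shows that applying $\mathcal{R}(\cdot)$ to the quaternion $\boxtimes$-product \eqref{2-4} reproduces exactly the $4\times 4$ JRS block pattern, it suffices to produce quaternion matrices $\mathbf{V}_1,\dots,\mathbf{V}_k\in\mathbb{Q}^{n\times m}$ that are orthonormal under the global inner product $\langle\cdot,\cdot\rangle$ and whose projection $\mathbf{H}=\big(\mbox{tr}(\mathbf{V}_p^*\mathbf{W}\mathbf{V}_q)\big)_{p,q=1}^{k}$ is an upper Hessenberg quaternion matrix in the sense of Section~\ref{section-2}; the matrix $\overline{H}=\mathcal{R}(\mathbf{H})$ is then upper JRS-Hessenberg automatically.

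I would argue by induction on $k$. Choose any $\mathbf{V}_1$ with $\|\mathbf{V}_1\|=1$; given an orthonormal system $\mathbf{V}_1,\dots,\mathbf{V}_q$, form $\mathbf{W}\mathbf{V}_q$, set $h_{iq}=\langle\mathbf{W}\mathbf{V}_q,\mathbf{V}_i\rangle$ for $i\le q$, subtract the \emph{right}-linear combination $\sum_{i=1}^{q}\mathbf{V}_i h_{iq}$, and normalize the residual to obtain $\mathbf{V}_{q+1}$ with $h_{q+1,q}=\|\mathbf{W}\mathbf{V}_q-\sum_{i=1}^{q}\mathbf{V}_i h_{iq}\|$. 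The first structural check is orthogonality: by the right-linearity $\langle\mathbf{V}_i\bm{\alpha},\mathbf{V}_j\rangle=\langle\mathbf{V}_i,\mathbf{V}_j\rangle\bm{\alpha}$ recorded for the quaternionic Hilbert space, the subtracted terms cancel exactly and $\mathbf{V}_{q+1}\perp\mathbf{V}_j$ for every $j\le q$. This is the place where noncommutativity must be respected: the coefficients $h_{iq}$ have to multiply $\mathbf{V}_i$ on the right, otherwise the cancellation fails.

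The second check yields the Hessenberg pattern. By construction $\mathbf{W}\mathbf{V}_q\in\mbox{span}\{\mathbf{V}_1,\dots,\mathbf{V}_{q+1}\}$, so for $p>q+1$ the entry $\mathbf{H}_{pq}=\langle\mathbf{W}\mathbf{V}_q,\mathbf{V}_p\rangle$ vanishes, while each subdiagonal entry $h_{q+1,q}$ is a Frobenius norm and hence a \emph{real} scalar. Writing $\mathbf{H}=H_0+H_1\mathbf{i}+H_2\mathbf{j}+H_3\mathbf{k}$, the vanishing below the subdiagonal makes $H_0$ upper Hessenberg, and the realness of the subdiagonal forces the $\mathbf{i},\mathbf{j},\mathbf{k}$ components to vanish on and below the subdiagonal, so $H_1,H_2,H_3$ are upper triangular. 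This is precisely the upper JRS-Hessenberg condition for $\overline{H}=\mathcal{R}(\mathbf{H})$. To finish, I would translate orthonormality back through $\mathcal{R}(\cdot)$: the relations $\langle\mathbf{V}_i,\mathbf{V}_j\rangle=\delta_{ij}$ are exactly what render $\mathcal{R}(\mathbf{V})$ JRS-symplectic with orthogonal block columns, and \eqref{2-5} then converts $\mathbf{H}$ into $\overline{H}$ with the stated block form.

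The hard part will be the bookkeeping that certifies the realness of the subdiagonal survives the passage to the real representation and produces exactly the block pattern of \eqref{2-5} with $H_1,H_2,H_3$ upper triangular, rather than merely a generic quaternion Hessenberg form. A secondary issue is breakdown: if the residual vanishes at some step before $k$, the Krylov space is $\mathbf{W}$-invariant and the recurrence terminates, so one takes the largest index for which all residuals are nonzero. If a transformation-based argument were preferred over the constructive one, the same real-subdiagonal normalization could instead be enforced a posteriori by the generalized symplectic Givens rotations $G^{(l)}$ of \eqref{2-6}, which preserve JRS-symmetry while annihilating the imaginary parts of the subdiagonal entries.
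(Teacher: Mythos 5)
Your proof is essentially correct, but it takes a genuinely different route from the paper. The paper's proof is a direct, transformation-based construction: it writes down an explicit orthogonally JRS-symplectic matrix $\mathcal{R}(\mathbf{V})$ whose first two block columns are hand-crafted (built from the quantities $\gamma^{(s)}=w_{22}^{(s)}-w_{11}^{(s)}$) and whose remaining block columns are the generalized symplectic Givens rotations $G^{(l)}$ of \eqref{2-6}, and then verifies via \eqref{2-5} that the resulting $\overline{H}$ has $\overline{H}_0$ upper Hessenberg and $\overline{H}_1,\overline{H}_2,\overline{H}_3$ upper triangular. You instead run a right-handed global quaternion Gram--Schmidt (Arnoldi) recurrence and deduce the pattern from two facts: right-linearity of the inner product makes the orthogonalization cancel exactly, and the subdiagonal entries are Frobenius norms, hence real, which is precisely what kills the $\mathbf{i},\mathbf{j},\mathbf{k}$ components on and below the subdiagonal. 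Your argument is more self-contained and in effect also proves Proposition \ref{pro-1} and the second identity of Proposition \ref{pro-3}, which the paper defers to citations; the paper's construction, by contrast, exhibits a concrete finite family of transformations and ties the theorem directly to the JRS-symplectic machinery of \cite{Jia1,Jia4}, though its displayed verification is only carried out on a $4\times 4$ block partition. The one point you should not leave as an aside is breakdown: the theorem asserts existence for a given $k$, so when a residual vanishes at step $q<k$ you must complete the system by adjoining any unit block orthogonal to $\mathbf{V}_1,\dots,\mathbf{V}_q$ (the corresponding subdiagonal entry is then zero, which is still consistent with the upper JRS-Hessenberg form); with that patch your induction closes.
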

\begin{proof}
Denote
$$\begin{aligned}
&{W}_0=\begin{bmatrix}
{w}_{11}^{(0)}&w_{12}^{(0)}&w_{13}^{(0)}&W_{14}^{(0)}\\
w_{21}^{(0)}&w_{22}^{(0)}&w_{23}^{(0)}&W_{24}^{(0)}\\
w_{31}^{(0)}&w_{32}^{(0)}&w_{33}^{(0)}&W_{34}^{(0)}\\
W_{41}^{(0)}&W_{42}^{(0)}&W_{43}^{(0)}&W_{44}^{(0)}\\
\end{bmatrix},\quad W_1=\begin{bmatrix}
{w}_{11}^{(1)}&w_{12}^{(1)}&w_{13}^{(1)}&W_{14}^{(1)}\\
w_{21}^{(1)}&w_{22}^{(1)}&w_{23}^{(1)}&W_{24}^{(1)}\\
w_{31}^{(1)}&w_{32}^{(1)}&w_{33}^{(1)}&W_{34}^{(1)}\\
W_{41}^{(1)}&W_{42}^{(1)}&W_{43}^{(1)}&W_{44}^{(1)}\\
\end{bmatrix},\\
&W_2=\begin{bmatrix}
{w}_{11}^{(2)}&w_{12}^{(2)}&w_{13}^{(2)}&W_{14}^{(2)}\\
w_{21}^{(2)}&w_{22}^{(2)}&w_{23}^{(2)}&W_{24}^{(2)}\\
w_{31}^{(2)}&w_{32}^{(2)}&w_{33}^{(2)}&W_{34}^{(2)}\\
W_{41}^{(2)}&W_{42}^{(2)}&W_{43}^{(2)}&W_{44}^{(2)}\\
\end{bmatrix},\quad W_3=\begin{bmatrix}
{w}_{11}^{(3)}&w_{12}^{(3)}&w_{13}^{(3)}&W_{14}^{(3)}\\
w_{21}^{(3)}&w_{22}^{(3)}&w_{23}^{(3)}&W_{24}^{(3)}\\
w_{31}^{(3)}&w_{32}^{(3)}&w_{33}^{(3)}&W_{34}^{(3)}\\
W_{41}^{(3)}&W_{42}^{(3)}&W_{43}^{(3)}&W_{44}^{(3)}\\
\end{bmatrix},\\
\end{aligned}$$
where $w_{ij}^{(s)}\in\mathbb{R},  W_{4j}^{(s)}\in\mathbb{R}^{(n-3)\times 1}, W_{i4}^{(s)}\in\mathbb{R}^{1\times (n-3)}$, $W_{44}^{(s)}\in\mathbb{R}^{(n-3)\times(n-3)}$, $s=0,1,2,3, i,j=1,2,3$. Let \begin{equation}\label{3-1}
\mathcal{R}(\mathbf{V})=:\begin{bmatrix}
V_0&-V_1&-V_2&-V_3\\
V_1&V_0&-V_3&V_2\\
V_2&V_3&V_0&-V_1\\
V_3&-V_2&V_1&V_0\\
\end{bmatrix}\in\mathbb{R}^{4n\times 4mk},\end{equation}
where \begin{equation}\label{3-2}\begin{aligned}
V_0&=[V_0^{(1)}, V_0^{(2)}, G_0^{(3)}, G_0^{(4)}, \cdots, G_0^{(k)}],\\
V_1&=[V_1^{(1)}, V_1^{(2)}, G_1^{(3)}, G_1^{(4)}, \cdots, G_1^{(k)}],\\
V_2&=[V_2^{(1)}, V_2^{(2)}, G_2^{(3)}, G_2^{(4)}, \cdots, G_2^{(k)}],\\
V_2&=[V_3^{(1)}, V_3^{(2)}, G_3^{(3)}, G_3^{(4)}, \cdots, G_3^{(k)}],\end{aligned}
\end{equation}
and $$\begin{aligned}
V_0^{(1)}&=\frac{1}{\sqrt{2}}\begin{bmatrix}
1&0&0\\
0&1&0\\
0&0&0^{(n-2)\times (m-2)}\\
\end{bmatrix},\quad V_1^{(1)}=V_2^{(1)}=V_3^{(1)}=0^{n\times m},\\
V_0^{(2)}&=\frac{1}{\sqrt{2}\gamma}\begin{bmatrix}
-\gamma^{(0)}&0&0\\
0&\gamma^{(0)}&0\\
0&0&0^{(n-2)\times (m-2)}\\
\end{bmatrix},\quad V_1^{(2)}=\frac{1}{\sqrt{2}\gamma}\begin{bmatrix}
-\gamma^{(1)}&0&0\\
0&\gamma^{(1)}&0\\
0&0&0^{(n-2)\times (m-2)}\\
\end{bmatrix},\\ V_2^{(2)}&=\frac{1}{\sqrt{2}\gamma}\begin{bmatrix}
-\gamma^{(2)}&0&0\\
0&\gamma^{(2)}&0\\
0&0&0^{(n-2)\times (m-2)}\\
\end{bmatrix},\quad  V_3^{(2)}=\frac{1}{\sqrt{2}\gamma}\begin{bmatrix}
-\gamma^{(3)}&0&0\\
0&\gamma^{(3)}&0\\
0&0&0^{(n-2)\times (m-2)}\\
\end{bmatrix},
\end{aligned}
$$
$\gamma=\sqrt{(\gamma^{(0)})^2+(\gamma^{(1)})^2+(\gamma^{(2)})^2+(\gamma^{(3)})^2}$ with $\gamma^{(s)}=w_{22}^{(s)}-w_{11}^{(s)} (s=0,1,2,3)$, and $G_s^{(l)}\in\mathbb{R}^{n\times m}$ are defined in \eqref{2-6}, $l=3,\cdots,k$.
It is easy to verify that $\mathcal{R}(\mathbf{V})$ with block column matrices is orthogonally JRS-symplectic. Then, 
by Eq.\eqref{2-5}, it follows that
$$
\mathcal{R}(\mathbf{V})^T\boxtimes(\mathcal{R}(\mathbf{W})\mathcal{R}(\mathbf{V})):=\begin{bmatrix}
\overline{H}_0&-\overline{H}_1&-\overline{H}_2&-\overline{H}_3\\
\overline{H}_1&\overline{H}_0&-\overline{H}_3&\overline{H}_2\\
\overline{H}_2&\overline{H}_3&\overline{H}_0&-\overline{H}_1\\
\overline{H}_3&-\overline{H}_2&\overline{H}_1&\overline{H}_0\\
\end{bmatrix}\in \mathbb{R}^{4k\times 4k},
$$
where $$\begin{aligned}
&\overline{H}_0=\begin{bmatrix}
\overline{w}_{11}^{(0)}&\overline{w}_{12}^{(0)}&\overline{w}_{13}^{(0)}&\overline{W}_{14}^{(0)}\\
\gamma^2&\overline{w}_{22}^{(0)}&\overline{w}_{23}^{(0)}&\overline{W}_{24}^{(0)}\\
0&\overline{w}_{32}^{(0)}&\overline{w}_{33}^{(0)}&W_{34}^{(0)}\\
0&0&\overline{W}_{43}^{(0)}&\overline{W}_{44}^{(0)}\\
\end{bmatrix},\quad \overline{H}_1=\begin{bmatrix}
\overline{w}_{11}^{(1)}&\overline{w}_{12}^{(1)}&\overline{w}_{13}^{(1)}&\overline{W}_{14}^{(1)}\\
0&\overline{w}_{22}^{(1)}&\overline{w}_{23}^{(1)}&\overline{W}_{24}^{(1)}\\
0&0&\overline{w}_{33}^{(1)}&\overline{W}_{34}^{(1)}\\
0&0&0&\overline{W}_{44}^{(1)}\\
\end{bmatrix},\\
&\overline{H}_2=\begin{bmatrix}
\overline{w}_{11}^{(2)}&\overline{w}_{12}^{(2)}&\overline{w}_{13}^{(2)}&\overline{W}_{14}^{(2)}\\
0&\overline{w}_{22}^{(2)}&\overline{w}_{23}^{(2)}&\overline{W}_{24}^{(2)}\\
0&0&\overline{w}_{33}^{(2)}&\overline{W}_{34}^{(2)}\\
0&0&0&\overline{W}_{44}^{(2)}\\
\end{bmatrix},\quad \overline{H}_3=\begin{bmatrix}
\overline{w}_{11}^{(3)}&\overline{w}_{12}^{(3)}&\overline{w}_{13}^{(3)}&\overline{W}_{14}^{(3)}\\
0&\overline{w}_{22}^{(3)}&\overline{w}_{23}^{(3)}&\overline{W}_{24}^{(3)}\\
0&0&\overline{w}_{33}^{(3)}&\overline{W}_{34}^{(3)}\\
0&0&0&\overline{W}_{44}^{(3)}\\
\end{bmatrix},\\
\end{aligned}$$
$\overline{W}_{43}^{(0)}=[\overline{w}_{43}^{(0)}, 0,\cdots, 0]^T\in\mathbb{R}^{(k-3)\times 1}$, and $\overline{W}_{44}^{(0)}\in \mathbb{R}^{(k-3)\times (k-3)}$ is of upper Hessenberg form, $\overline{W}_{44}^{(i)}\in \mathbb{R}^{(k-3)\times (k-3)}$ are upper triangular forms.
This shows that the assertion is surely true.
\end{proof}

Using the above results, we deduce the upper Hessenberg quaternion matrices by a quaternion matrix with unitary block column matrices.
\begin{theorem}\label{tho-2}
Let $\mathbf{W}=W_0+W_1\mathbf{i}+W_2\mathbf{j}+W_3\mathbf{k}$ be an $n\times n$ square quaternion matrix, and $k$ be a positive integer not greater than $n$. Then there exists a quaternion matrix $\mathbf{\mathcal{V}}_k=[\mathbf{V}_1,\mathbf{V}_2,\cdots,\mathbf{V}_k]\in\mathbb{Q}^{n\times nk}$  with unitary block column matrices, $\mathbf{V}_i\in\mathbb{Q}^{n\times n}$, $i=1,\cdots, k$, such that
\begin{equation}\label{3-3}
\mathbf{\mathcal{V}}_k^*\boxtimes(\mathbf{W}\mathbf{\mathcal{V}}_k)=\mathbf{H}_k
\end{equation}
is an upper Hessenberg quaternion matrix.
\end{theorem}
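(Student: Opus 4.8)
The plan is to prove the statement by transferring Theorem~\ref{tho-1} from the real JRS-symmetric setting back to the quaternion setting through the inverse of the homeomorphism $\mathcal{R}(\cdot)$. First I would form the real counterpart $\mathcal{R}(\mathbf{W})\in\mathbb{R}^{4n\times 4n}$ of the given quaternion matrix $\mathbf{W}$; as recalled in Section~\ref{section-2}, every real counterpart is automatically JRS-symmetric. Specializing the block size to $m=n$ in Theorem~\ref{tho-1}, there exists a JRS-symplectic matrix $\mathcal{R}(\mathbf{V})\in\mathbb{R}^{4n\times 4nk}$ with orthogonal block column matrices such that $\mathcal{R}(\mathbf{V})^T\boxtimes(\mathcal{R}(\mathbf{W})\mathcal{R}(\mathbf{V}))=\overline{H}\in\mathbb{R}^{4k\times 4k}$ is of upper JRS-Hessenberg form. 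All the data needed for the quaternion statement will be obtained as preimages of these objects.

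Next I would pull everything back through $\mathcal{R}^{-1}$. Because $\mathcal{R}(\mathbf{V})$ has exactly the JRS-symmetric block pattern of \eqref{3-1}, it is the real counterpart of a well-defined quaternion matrix, so I set $\mathbf{\mathcal{V}}_k:=\mathcal{R}^{-1}(\mathcal{R}(\mathbf{V}))=[\mathbf{V}_1,\dots,\mathbf{V}_k]\in\mathbb{Q}^{n\times nk}$ and $\mathbf{H}_k:=\mathcal{R}^{-1}(\overline{H})$, which is a $k\times k$ quaternion matrix since $\overline{H}$ is $4k\times 4k$ and JRS-symmetric. Two structural correspondences then have to be checked. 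First, since $\mathcal{R}$ is an algebra homomorphism sending the conjugate transpose to the transpose, a block column $\mathbf{V}_i$ is unitary exactly when its image $\mathcal{R}(\mathbf{V}_i)$ is orthogonal; combined with the mutual orthogonality between distinct block columns encoded in the JRS-symplectic property of $\mathcal{R}(\mathbf{V})$, this shows that $\mathbf{\mathcal{V}}_k$ has unitary block column matrices. Second, by the very definition of the upper JRS-Hessenberg form given in Section~\ref{section-2}, the fact that $\overline{H}$ is upper JRS-Hessenberg (its $(0)$-block is upper Hessenberg and its $(1),(2),(3)$-blocks are upper triangular) is precisely the statement that $\mathbf{H}_k=\mathcal{R}^{-1}(\overline{H})$ is an upper Hessenberg quaternion matrix.

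Finally I would identify the $\boxtimes$ product on the two sides. Equation~\eqref{2-5} is exactly the assertion that $\mathcal{R}$ intertwines the two $\boxtimes$ products, namely
$$
\mathcal{R}\bigl(\mathbf{\mathcal{V}}_k^*\boxtimes(\mathbf{W}\mathbf{\mathcal{V}}_k)\bigr)=\mathcal{R}(\mathbf{\mathcal{V}}_k)^T\boxtimes\bigl(\mathcal{R}(\mathbf{W})\mathcal{R}(\mathbf{\mathcal{V}}_k)\bigr).
$$
The right-hand side equals $\overline{H}=\mathcal{R}(\mathbf{H}_k)$ by Theorem~\ref{tho-1} and the definition of $\mathbf{H}_k$, so injectivity of $\mathcal{R}$ forces $\mathbf{\mathcal{V}}_k^*\boxtimes(\mathbf{W}\mathbf{\mathcal{V}}_k)=\mathbf{H}_k$, which is upper Hessenberg by the previous paragraph. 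This establishes \eqref{3-3}.

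I expect the main obstacle to be the bookkeeping in the second paragraph: verifying that the ``orthogonal block column matrices'' of the real symplectic factor correspond faithfully to ``unitary block column matrices'' of $\mathbf{\mathcal{V}}_k$, and confirming that the intertwining relation \eqref{2-5} applies verbatim when $\mathcal{R}(\mathbf{V})$ is the specific factor produced by Theorem~\ref{tho-1} rather than an arbitrary quaternion matrix. An alternative, more self-contained route would mirror the constructive argument of Theorem~\ref{tho-1} directly at the quaternion level, building $\mathbf{V}_1,\dots,\mathbf{V}_k$ block-column by block-column via generalized symplectic Givens rotations; but the transfer argument above is shorter and reuses Theorem~\ref{tho-1} wholesale.
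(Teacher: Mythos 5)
Your proposal is correct and follows essentially the same route as the paper: the paper's proof likewise invokes Theorem~\ref{tho-1} for the real counterpart $\mathcal{R}(\mathbf{W})$, sets $\mathbf{\mathcal{V}}_k=\mathcal{R}^{-1}(\mathcal{R}(\mathbf{V}))$ and $\mathbf{H}_k=\mathcal{R}^{-1}(\overline{H})$, and concludes by applying the inverse mapping $\mathcal{R}^{-1}$. Your version simply spells out the intertwining of the two $\boxtimes$ products via \eqref{2-5} and the orthogonal-versus-unitary correspondence, details the paper leaves implicit.
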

\begin{proof}
From Theorem \ref{tho-1}, for the real counterpart $\mathcal{R}(\mathbf{W})$ of $\mathbf{W}$,
there exists a JRS-symplectic matrix $\mathcal{R}(\mathbf{V})$ whose block column matrices are mutually orthogonal, such that
\begin{equation}\label{3-4}
\mathcal{R}(\mathbf{V})^T\boxtimes (\mathcal{R}(\mathbf{W})\mathcal{R}(\mathbf{V}))=\overline{H}\in \mathbb{R}^{4k\times 4k}
\end{equation}
is of upper JRS-Hessenberg form. Denote $\mathbf{H}_k=\mathcal{R}^{-1}(\overline{H})$, $\mathbf{\mathcal{V}}_k=\mathcal{R}^{-1}(\mathcal{R}(\mathbf{V}))$, i.e., $\mathbf{V}_p=V_0^{(p)}+V_1^{(p)}\mathbf{i}+V_2^{(p)}\mathbf{j}+V_3^{(p)}\mathbf{k}, \mathbf{V}_l=G_0^{(l)}+G_1^{(l)}\mathbf{i}+G_2^{(l)}\mathbf{j}+G_3^{(l)}\mathbf{k},$ $p=1,2, l=3,\cdots,k$. Applying the inverse linear homeomorphic mapping $\mathcal{R}^{-1}$, we can see that Eq.\eqref{3-3} holds.
\end{proof}

As stated by \cite{Jia1}, all real counterparts of $\mathbf{A}$  are permutationally equivalent. This shows that Theorem \ref{tho-2} holds for different real counterparts of $\mathbf{A}$. Moreover, similar to \cite{Jia4}, we call that the decomposition in \eqref{3-4} is \emph{structure-preserving}, since $\overline{H}$ here inherits the JRS-symmetry of $\mathcal{R}(\mathbf{W})$. From a numerical point of view, we only generate and store the four parts of quaternion matrices, rather than their real counterparts, which saves the computational operations and storage.

 From the above discussions, we now derive the global quaternion Arnoldi method.
Let $\mathbf{A}\in \mathbb{Q}^{m\times m}$ and $\mathbf{V}\in \mathbb{Q}^{n\times m}$ be the quaternion matrices, and let $k$ be a positive integer. The quaternion matrix Krylov subspace is of the form
\begin{equation}\label{3-5}
\mathcal{K}_k(\mathbf{A},\mathbf{V}):=\mbox{span}\{\mathbf{V},\mathbf{A}\mathbf{V},\cdots,\mathbf{A}^{k-1}\mathbf{V}\}.
\end{equation} 
Each element is only written as a right-hand side linear combination of $\mathbf{V},\mathbf{A}\mathbf{V},\cdots,$ $\mathbf{A}^{k-1}\mathbf{V}$, rather than a polynomial of degree $k-1$ formed by them. As shown in \cite{Jia4}, we recall that the grade of $\mathbf{V}$ with $\mathbf{A}$ is the smallest positive integer $\bm{\mu}$, i.e., the dimension of $\mathcal{K}_k(\mathbf{A},\mathbf{V})$, such that $\mathbf{V},\mathbf{A}\mathbf{V},\cdots,\mathbf{A}^{\bm{\mu}-1}\mathbf{V}$ are linearly independent, but $\mathbf{V},\mathbf{A}\mathbf{V},\cdots,\mathbf{A}^{\bm{\mu}}\mathbf{V}$ are linearly dependent. 

The global quaternion Arnoldi method forms an orthonormal basis, i.e.,
\begin{equation}\label{3-6}
\langle{\mathbf{V}}_{i},{\mathbf{V}}_{j}\rangle:=\left\{
 \begin{aligned}
 	&1,\quad\quad \mbox{if}\quad i=j,\\
&0,\quad\quad \mbox{if}\quad i\neq j,\\
 		\end{aligned}
 \right.
\end{equation}
of the quaternion Krylov subspace $\mathcal{K}_k(\mathbf{A},\mathbf{V})$. The method is expounded in detail as follows.
\bigskip
\begin{breakablealgorithm}
	\caption{ Global Quaternion Arnoldi Method}\label{algorithm-1}
	\begin{algorithmic}[1]
		 \STATE Choose an initial guess $\mathbf{V}_1$ such that $\|\mathbf{V}_1\|=1$.\\
		\STATE 	For $j=1,2,\cdots,k$, Do:\\
	\STATE Compute $\mathbf{W}_j=\mathbf{A}\mathbf{V}_j$\\
		\STATE  Compute $\mathbf{h}_{ij}=\langle\mathbf{W}_j,\mathbf{V}_i\rangle=\mbox{tr}( \mathbf{V}_i^*\mathbf{W}_j)$ for $i=1,\cdots,j$\\
		\STATE  Compute $\mathbf{W}_j=\mathbf{W}_j-\sum_{i=1}^j\mathbf{V}_i\mathbf{h}_{ij}$\\
		\STATE $\mathbf{h}_{j+1,j}=\|\mathbf{W}_j\|$. If $\mathbf{h}_{j+1,j}=0$ Stop\\
		\STATE $\mathbf{V}_{j+1}=\mathbf{W}_{j}/\mathbf{h}_{j+1,j}$\\
		\STATE End Do\\
	\end{algorithmic}
\end{breakablealgorithm}

Note that the key point of Algorithm \ref{algorithm-1} is to compute $\mathbf{A}\mathbf{V}_j$ and $\langle\mathbf{W}_j,\mathbf{V}_i\rangle$. Using Eq.\eqref{1-1}, the quaternion matrix-matrix multiplication and the inner product are computed by
\begin{equation}\label{3-7}
\begin{aligned}
\mathbf{W}_j&=\mathcal{R}^{-1}(\mathcal{R}(\mathbf{A})\mathcal{R}(\mathbf{V}_j)),\\
\langle\mathbf{W}_j,\mathbf{V}_i\rangle&=\mbox{tr}(\mathcal{R}^{-1}(\mathcal{R}(\mathbf{V}_i^*)\mathcal{R}(\mathbf{W}_j))).
\end{aligned}
\end{equation}
Indeed, we can achieve the above relations once 
the four parts consisting of the first block column of their real counterparts are obtained. 
Let $V_t$ be the $t$-th block column of the real counterpart $\mathcal{R}(\mathbf{V}_i)$ of $\mathbf{V}_i$, $t=1,2,3,4$, and $\mathbf{h}_{ij}=h_0+h_1\mathbf{i}+h_2\mathbf{j}+h_3\mathbf{k}$. Then $\mathbf{W}_j$ and $\mathbf{h}_{ij}$ are computed by
\begin{equation}\label{3-8}
\begin{aligned}
\mathcal{R}(\mathbf{W}_j)_{c}&=\mathcal{R}(\mathbf{A})\mathcal{R}(\mathbf{V}_j)_{c},\\
h_{t-1}&=\mbox{tr}(V_t^T\mathcal{R}(\mathbf{W}_j)_{c}),
\end{aligned}
\end{equation}
respectively. This saves three-quarters of the theoretical costs, since the above matrix-matrix multiplication and inner product cost only $4mn(8n-1)$ and $4m(8n-1)$ real operations, respectively. Additionally, we have the following statements for Algorithm \ref{algorithm-1}.

\begin{proposition}\label{pro-1}
Assume that Algorithm \ref{algorithm-1} does not stop before the $k$-th step. Then the block system $\{\mathbf{V}_1,\mathbf{V}_2,\cdots,\mathbf{V}_k\}$ forms an orthonormal basis of the quaternion matrix Krylov subspace $\mathcal{K}_k(\mathbf{A},\mathbf{V}_1)$. 
\end{proposition}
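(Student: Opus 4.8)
The plan is to prove the statement by induction on the step index $j$, establishing simultaneously two facts: (i) the block system $\{\mathbf{V}_1,\dots,\mathbf{V}_j\}$ is orthonormal in the sense of \eqref{3-6}, and (ii) it spans the same right subspace as $\{\mathbf{V}_1,\mathbf{A}\mathbf{V}_1,\dots,\mathbf{A}^{j-1}\mathbf{V}_1\}$, namely $\mathcal{K}_j(\mathbf{A},\mathbf{V}_1)$. The base case $j=1$ is immediate: Step 1 of Algorithm \ref{algorithm-1} guarantees $\langle\mathbf{V}_1,\mathbf{V}_1\rangle=\|\mathbf{V}_1\|^2=1$, and $\mathcal{K}_1(\mathbf{A},\mathbf{V}_1)=\mathrm{span}\{\mathbf{V}_1\}$ by definition. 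Throughout, the hypothesis that the algorithm does not break down before step $k$ ensures $\mathbf{h}_{j+1,j}=\|\mathbf{W}_j\|\neq 0$, so every normalization in Step 7 is well defined.

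For the orthonormality step, I would fix $j$, assume $\langle\mathbf{V}_i,\mathbf{V}_l\rangle=\delta_{il}$ for $i,l\le j$, and compute $\langle\mathbf{W}_j,\mathbf{V}_l\rangle$ for the post-orthogonalization residual produced in Step 5. Writing $\mathbf{W}_j=\mathbf{A}\mathbf{V}_j-\sum_{i=1}^j\mathbf{V}_i\mathbf{h}_{ij}$ with $\mathbf{h}_{ij}=\langle\mathbf{A}\mathbf{V}_j,\mathbf{V}_i\rangle$, the right linearity of the inner product gives $\langle\mathbf{V}_i\mathbf{h}_{ij},\mathbf{V}_l\rangle=\langle\mathbf{V}_i,\mathbf{V}_l\rangle\mathbf{h}_{ij}$, so the inductive hypothesis collapses the sum to $\mathbf{h}_{lj}$ and hence $\langle\mathbf{W}_j,\mathbf{V}_l\rangle=\mathbf{h}_{lj}-\mathbf{h}_{lj}=0$ for each $l\le j$. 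Because $\mathbf{h}_{j+1,j}=\|\mathbf{W}_j\|$ is a \emph{real} scalar, it commutes with everything and factors out of the inner product on either side; thus $\langle\mathbf{V}_{j+1},\mathbf{V}_l\rangle=\mathbf{h}_{j+1,j}^{-1}\langle\mathbf{W}_j,\mathbf{V}_l\rangle=0$ and $\|\mathbf{V}_{j+1}\|=\|\mathbf{W}_j\|/\mathbf{h}_{j+1,j}=1$, completing the orthonormality induction.

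For the span equality, I would prove $\mathrm{span}\{\mathbf{V}_1,\dots,\mathbf{V}_j\}=\mathcal{K}_j(\mathbf{A},\mathbf{V}_1)$ by a two-sided inclusion, again inductively. The inclusion $\subseteq$ follows from Steps 5 and 7, which exhibit $\mathbf{V}_{j+1}$ as a right-hand-side linear combination of $\mathbf{A}\mathbf{V}_j$ and $\mathbf{V}_1,\dots,\mathbf{V}_j$; since $\mathbf{V}_j\in\mathcal{K}_j(\mathbf{A},\mathbf{V}_1)$ by hypothesis and left multiplication by $\mathbf{A}$ distributes over right-hand-side combinations (by associativity of quaternion matrix products, $\mathbf{A}(\mathbf{X}\bm{\alpha})=(\mathbf{A}\mathbf{X})\bm{\alpha}$), we get $\mathbf{A}\mathbf{V}_j\in\mathcal{K}_{j+1}(\mathbf{A},\mathbf{V}_1)$ and therefore $\mathbf{V}_{j+1}\in\mathcal{K}_{j+1}(\mathbf{A},\mathbf{V}_1)$. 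Conversely, rearranging the same relation as $\mathbf{A}\mathbf{V}_j=\mathbf{V}_{j+1}\mathbf{h}_{j+1,j}+\sum_{i=1}^j\mathbf{V}_i\mathbf{h}_{ij}$ shows inductively that $\mathbf{A}^{j}\mathbf{V}_1\in\mathrm{span}\{\mathbf{V}_1,\dots,\mathbf{V}_{j+1}\}$, giving $\supseteq$. Finally, orthonormal blocks are linearly independent — applying $\langle\cdot,\mathbf{V}_l\rangle$ to a vanishing combination forces each coefficient to vanish — so $\{\mathbf{V}_1,\dots,\mathbf{V}_k\}$ is in fact a basis of $\mathcal{K}_k(\mathbf{A},\mathbf{V}_1)$, which is the claim.

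The routine parts are the two inductions; the point demanding genuine care is the noncommutativity bookkeeping. The hard part will be keeping every quaternion scalar on the correct side: the inner product is right linear in its first slot (scalars come out on the right), the Krylov subspace is a \emph{right} span, and the coefficients $\mathbf{h}_{ij}$ multiply the $\mathbf{V}_i$ on the right. The computation closes cleanly only because $\mathbf{h}_{j+1,j}=\|\mathbf{W}_j\|$ is real and hence central; were the normalizer a general quaternion, the factorization $\langle\mathbf{V}_{j+1},\mathbf{V}_l\rangle=\mathbf{h}_{j+1,j}^{-1}\langle\mathbf{W}_j,\mathbf{V}_l\rangle$ and the unit-norm conclusion would both require additional justification.
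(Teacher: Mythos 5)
Your proof is correct, and it is essentially the argument the paper intends: the paper omits the proof entirely, deferring to the classical Arnoldi/Gram--Schmidt induction in Saad's book, which is exactly the two-part induction (orthonormality plus span equality) you carry out. Your explicit attention to right linearity of the inner product, the right-span structure of $\mathcal{K}_k(\mathbf{A},\mathbf{V}_1)$, and the centrality of the real normalizer $\mathbf{h}_{j+1,j}=\|\mathbf{W}_j\|$ supplies precisely the noncommutativity bookkeeping needed to transplant that classical argument to the quaternion setting.
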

\begin{proof}
The proof is similar to those presented by \cite{Saad1} and is omitted.
\end{proof}
\begin{proposition}\label{pro-2}
Let $\mathbf{\mathcal{V}}_k=[\mathbf{V}_1,\mathbf{V}_2,\cdots,\mathbf{V}_k]$ be a block matrix, where $\{\mathbf{V}_1,\mathbf{V}_2,\cdots,\mathbf{V}_k\}$ gengerated by Algorithm \ref{algorithm-1} forms an orthonormal basis of $\mathcal{K}_k(\mathbf{A},\mathbf{V}_1),$  and let $\bm{\alpha}=(\bm{\alpha}_1,\bm{\alpha}_2,\cdots,\bm{\alpha}_k)^T\in\mathbb{Q}^{k}$. Then we have
\begin{equation}\label{3-9}
\|\mathbf{\mathcal{V}}_k*\bm{\alpha}\|=\|\bm{\alpha}\|.
\end{equation}
\end{proposition}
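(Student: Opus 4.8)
The plan is to work with the squares of both sides, exploiting that the norm is the one induced by the inner product, so that $\|\mathbf{\mathcal{V}}_k*\bm{\alpha}\|^2=\langle\mathbf{\mathcal{V}}_k*\bm{\alpha},\mathbf{\mathcal{V}}_k*\bm{\alpha}\rangle$, where $\mathbf{\mathcal{V}}_k*\bm{\alpha}=\sum_{i=1}^k\mathbf{V}_i\bm{\alpha}_i$ by the definition \eqref{2-1}. First I would expand this inner product into the double sum $\sum_{i=1}^k\sum_{j=1}^k\langle\mathbf{V}_i\bm{\alpha}_i,\mathbf{V}_j\bm{\alpha}_j\rangle$. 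Strictly speaking this requires extending the right-linearity property (stated for only two summands) to finite sums in the first argument, which follows by an immediate induction and will be invoked without further comment.

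The crux is to reduce a single term $\langle\mathbf{V}_i\bm{\alpha}_i,\mathbf{V}_j\bm{\alpha}_j\rangle$ to an expression involving only the Gram quantity $\langle\mathbf{V}_i,\mathbf{V}_j\rangle$ and the scalars. Since the inner product is right-linear only in its first slot, I would first pull $\bm{\alpha}_i$ out of the first argument to get $\langle\mathbf{V}_i,\mathbf{V}_j\bm{\alpha}_j\rangle\bm{\alpha}_i$. To remove the scalar still sitting inside the second slot I would invoke quaternionic hermiticity together with $\overline{\bm{p}\bm{q}}=\overline{\bm{q}}\,\overline{\bm{p}}$, yielding $\langle\mathbf{V}_i,\mathbf{V}_j\bm{\alpha}_j\rangle=\overline{\langle\mathbf{V}_j\bm{\alpha}_j,\mathbf{V}_i\rangle}=\overline{\langle\mathbf{V}_j,\mathbf{V}_i\rangle\bm{\alpha}_j}=\overline{\bm{\alpha}_j}\,\langle\mathbf{V}_i,\mathbf{V}_j\rangle$. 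Hence each term equals $\overline{\bm{\alpha}_j}\,\langle\mathbf{V}_i,\mathbf{V}_j\rangle\,\bm{\alpha}_i$, with the two scalars ending up on opposite sides of the Gram factor.

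Finally I would substitute the orthonormality relation \eqref{3-6}, so that $\langle\mathbf{V}_i,\mathbf{V}_j\rangle=\delta_{ij}$ annihilates every off-diagonal term and leaves $\sum_{i=1}^k\overline{\bm{\alpha}_i}\bm{\alpha}_i=\sum_{i=1}^k|\bm{\alpha}_i|^2=\|\bm{\alpha}\|^2$. Taking square roots then gives \eqref{3-9}.

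I expect the main obstacle to be the noncommutative bookkeeping rather than any deep idea: because quaternion scalars do not commute and the inner product is linear on the right of its first argument only, the scalars $\bm{\alpha}_i$ and $\bm{\alpha}_j$ must be routed to opposite sides of the Gram factor and one of them conjugated, so the single-term identity $\langle\mathbf{V}_i\bm{\alpha}_i,\mathbf{V}_j\bm{\alpha}_j\rangle=\overline{\bm{\alpha}_j}\,\langle\mathbf{V}_i,\mathbf{V}_j\rangle\,\bm{\alpha}_i$ is exactly where care is needed. Orthogonality makes the residual asymmetry harmless on the diagonal, so once that identity is established correctly the remainder is routine.
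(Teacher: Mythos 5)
Your proposal is correct: the identity $\langle\mathbf{V}_i\bm{\alpha}_i,\mathbf{V}_j\bm{\alpha}_j\rangle=\overline{\bm{\alpha}_j}\,\langle\mathbf{V}_i,\mathbf{V}_j\rangle\,\bm{\alpha}_i$ is exactly what the trace-based inner product gives (one can also verify it directly from $\mathrm{tr}(\overline{\bm{\alpha}_j}\mathbf{V}_j^{*}\mathbf{V}_i\bm{\alpha}_i)=\overline{\bm{\alpha}_j}\,\mathrm{tr}(\mathbf{V}_j^{*}\mathbf{V}_i)\,\bm{\alpha}_i$), and orthonormality then collapses the double sum to $\sum_i|\bm{\alpha}_i|^2$. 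The paper omits this proof entirely, deferring to the real-case argument of Jbilou et al.; your write-up is the same expansion-by-orthonormality argument with the quaternionic noncommutativity handled correctly, so it matches the intended route.
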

\begin{proof}
The proof is similar to that of \cite{Jbilou} and is omitted.
\end{proof}
\begin{proposition}\label{pro-3}
Let $\mathbf{\mathcal{V}}_k$ be defined as before, and $\overline{\mathbf{H}}_k$ be the $(k+1)\times k$ upper Hessenberg quaternion matrix whose nonzero entries $\mathbf{h}_{ij}$ are computed by Algorithm \ref{algorithm-1}, and let $\mathbf{H}_k$ be the $k\times k$
quaternion matrix obtained by deleting its last row. Then the following relations hold:
\begin{equation}\label{3-10}
\begin{aligned}
\mathbf{A}\mathbf{\mathcal{V}}_k&=\mathbf{\mathcal{V}}_k*\mathbf{H}_k+[0^{n\times m},\cdots,0^{n\times m},\mathbf{V}_{k+1}]\mathbf{h}_{k+1,k}\\
&=\mathbf{\mathcal{V}}_{k+1}*\overline{\mathbf{H}}_k,\\
\mathbf{\mathcal{V}}_k^*\boxtimes(\mathbf{A}\mathbf{\mathcal{V}}_k)&=\mathbf{H}_k,
\end{aligned}
\end{equation}
where $\mathbf{\mathcal{V}}_{k+1}=[\mathbf{\mathcal{V}}_{k},\mathbf{V}_{k+1}]$.
\end{proposition}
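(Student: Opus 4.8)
The plan is to read off the basic column-wise recurrence directly from Algorithm \ref{algorithm-1} and then repackage it in the block $*$- and $\boxtimes$-notation. First I would combine Steps 5 and 7 of the algorithm: after Step 5 the working block is $\mathbf{W}_j=\mathbf{A}\mathbf{V}_j-\sum_{i=1}^{j}\mathbf{V}_i\mathbf{h}_{ij}$, and Step 7 rewrites it as $\mathbf{W}_j=\mathbf{V}_{j+1}\mathbf{h}_{j+1,j}$ (with $\mathbf{h}_{j+1,j}=\|\mathbf{W}_j\|$ a nonnegative real scalar, so the division in Step 7 is unambiguous). Together these yield, for each $j=1,\dots,k$, the single-column quaternion Arnoldi identity
\begin{equation*}
\mathbf{A}\mathbf{V}_j=\sum_{i=1}^{j+1}\mathbf{V}_i\mathbf{h}_{ij},
\end{equation*}
in which every coefficient $\mathbf{h}_{ij}$ multiplies its basis block $\mathbf{V}_i$ \emph{on the right}, consistent with the right-hand-side linear combinations fixed in Section \ref{section-2}.

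Next I would assemble these $k$ identities into the block relations. Placing the columns $j=1,\dots,k$ side by side, the $j$-th block column of $\mathbf{A}\mathbf{\mathcal{V}}_k$ is $\sum_{i=1}^{j+1}\mathbf{V}_i\mathbf{h}_{ij}$; by the definition \eqref{2-2} of the $*$ product and the fact that $\overline{\mathbf{H}}_k$ is $(k+1)\times k$ upper Hessenberg with entries $\mathbf{h}_{ij}$ (so $\mathbf{h}_{ij}=0$ for $i>j+1$), this is exactly the $j$-th block column of $\mathbf{\mathcal{V}}_{k+1}*\overline{\mathbf{H}}_k$, giving the second line of \eqref{3-10}. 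The first line then follows by isolating the single term that the truncation $\mathbf{H}_k$ misses: for $j<k$ the sum $\sum_{i=1}^{j+1}$ already lies within the first $k$ rows, so $\mathbf{\mathcal{V}}_k*\mathbf{H}_k$ reproduces the full column, while for $j=k$ the last term $\mathbf{V}_{k+1}\mathbf{h}_{k+1,k}$ is supplied by the rank-one correction $[0^{n\times m},\dots,0^{n\times m},\mathbf{V}_{k+1}]\mathbf{h}_{k+1,k}$, which affects only the final block column.

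For the third relation I would compute $\mathbf{\mathcal{V}}_k^*\boxtimes(\mathbf{A}\mathbf{\mathcal{V}}_k)$ entrywise. Its $(i,j)$ entry is $\mbox{tr}(\mathbf{V}_i^*\mathbf{A}\mathbf{V}_j)=\langle\mathbf{A}\mathbf{V}_j,\mathbf{V}_i\rangle$ by definition \eqref{2-4}; substituting the single-column identity and using right linearity of the inner product gives $\langle\mathbf{A}\mathbf{V}_j,\mathbf{V}_i\rangle=\sum_{l=1}^{j+1}\langle\mathbf{V}_l,\mathbf{V}_i\rangle\mathbf{h}_{lj}$. The orthonormality \eqref{3-6} collapses this sum to $\mathbf{h}_{ij}$ when $i\le j+1$ and to $0$ when $i>j+1$; restricted to $1\le i,j\le k$ these are precisely the entries of the $k\times k$ upper Hessenberg matrix $\mathbf{H}_k$, which proves $\mathbf{\mathcal{V}}_k^*\boxtimes(\mathbf{A}\mathbf{\mathcal{V}}_k)=\mathbf{H}_k$.

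The bookkeeping is routine; the only genuine point of care is noncommutativity. Because quaternion scalars do not commute, I must keep every coefficient on the right throughout, and I must invoke the \emph{right} linearity of $\langle\cdot,\cdot\rangle$ (rather than linearity in the second slot) when expanding $\langle\mathbf{A}\mathbf{V}_j,\mathbf{V}_i\rangle$; this is what guarantees the coefficients emerge as $\mathbf{h}_{lj}$ on the right with no unwanted conjugation. I expect this ordering discipline, together with checking that $\mathbf{h}_{j+1,j}$ is real so that $\langle\mathbf{A}\mathbf{V}_j,\mathbf{V}_{j+1}\rangle=\mathbf{h}_{j+1,j}$ holds exactly, to be the main feature distinguishing the quaternion argument from its classical real or complex counterpart.
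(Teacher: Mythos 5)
Your proposal is correct and follows essentially the same route as the paper: the paper likewise reads the column recurrence $\mathbf{A}\mathbf{V}_j=\sum_{i=1}^{j+1}\mathbf{V}_i\mathbf{h}_{ij}$ off lines 4--7 of Algorithm \ref{algorithm-1} to get the first two identities, and then obtains $\mathbf{\mathcal{V}}_k^*\boxtimes(\mathbf{A}\mathbf{\mathcal{V}}_k)=\mathbf{H}_k$ by substituting that relation and invoking orthonormality, which is exactly your entrywise computation written in block form. Your explicit attention to keeping the coefficients on the right and to the right linearity of the inner product is a correct (and welcome) elaboration of what the paper leaves implicit.
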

\begin{proof}
The first term follows by lines 4-7 of Algorithm \ref{algorithm-1}. For the second term, it follows that 
$$\begin{aligned}
\mathbf{\mathcal{V}}_k^*\boxtimes(\mathbf{A}\mathbf{\mathcal{V}}_k)&=\mathbf{\mathcal{V}}_k^*\boxtimes(\mathbf{\mathcal{V}}_k*\mathbf{H}_k+[0^{n\times m},\cdots,0^{n\times m},\mathbf{V}_{k+1}]\mathbf{h}_{k+1,k})\\
&=\mathbf{\mathcal{V}}_k^*\boxtimes[\sum_{i=1}^k\mathbf{V}_i\mathbf{h}_{i1}, \sum_{i=1}^k\mathbf{V}_i\mathbf{h}_{i2},\cdots,\sum_{i=1}^k\mathbf{V}_i\mathbf{h}_{ik}]\\
&=\mathbf{H}_k.
\end{aligned}$$
\end{proof}

\begin{proposition}\label{pro-4}
Algorithm \ref{algorithm-1}  breaks down at step $k$ if and only if the grade of $\mathbf{V}_1$ with $\mathbf{A}$ is $k$. In this case, the the subspace $\mathcal{K}_k(\mathbf{A},\mathbf{V}_1)$ is invariant under $\mathbf{A}$. 
\end{proposition}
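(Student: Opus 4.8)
The plan is to prove a chain of equivalences tying together the breakdown condition, the stagnation of the Krylov subspaces, and the grade, and to read off the invariance as a by-product. By ``breaks down at step $k$'' I mean that Algorithm \ref{algorithm-1} completes steps $1,\dots,k-1$ with $\mathbf{h}_{j+1,j}\neq 0$ and then produces $\mathbf{h}_{k+1,k}=\|\mathbf{W}_k\|=0$. Under this no-earlier-breakdown hypothesis, Proposition \ref{pro-1} guarantees that $\{\mathbf{V}_1,\dots,\mathbf{V}_k\}$ is an orthonormal, hence linearly independent, basis of $\mathcal{K}_k(\mathbf{A},\mathbf{V}_1)$; in particular $\mathbf{V}_1,\mathbf{A}\mathbf{V}_1,\dots,\mathbf{A}^{k-1}\mathbf{V}_1$ are linearly independent, so the grade $\bm{\mu}$ of $\mathbf{V}_1$ with $\mathbf{A}$ is at least $k$. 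This observation is the common starting point for both directions.

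For the forward implication I would start from lines 5--6 of Algorithm \ref{algorithm-1}: $\mathbf{h}_{k+1,k}=0$ means $\mathbf{W}_k=\mathbf{A}\mathbf{V}_k-\sum_{i=1}^{k}\mathbf{V}_i\mathbf{h}_{ik}=\mathbf{0}$, so $\mathbf{A}\mathbf{V}_k\in\mathcal{K}_k(\mathbf{A},\mathbf{V}_1)$. Feeding $\mathbf{h}_{k+1,k}=0$ into the relation \eqref{3-10} of Proposition \ref{pro-3} gives $\mathbf{A}\mathbf{\mathcal{V}}_k=\mathbf{\mathcal{V}}_k*\mathbf{H}_k$, so that $\mathbf{A}\mathbf{V}_j\in\mathcal{K}_k(\mathbf{A},\mathbf{V}_1)$ for every $j\le k$. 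Since left multiplication by $\mathbf{A}$ commutes with right multiplication by a quaternion scalar, this yields $\mathbf{A}\,\mathcal{K}_k(\mathbf{A},\mathbf{V}_1)\subseteq\mathcal{K}_k(\mathbf{A},\mathbf{V}_1)$, which is precisely the claimed invariance. Invariance forces $\mathbf{A}^{k}\mathbf{V}_1\in\mathcal{K}_k(\mathbf{A},\mathbf{V}_1)$, so $\mathbf{V}_1,\mathbf{A}\mathbf{V}_1,\dots,\mathbf{A}^{k}\mathbf{V}_1$ are linearly dependent; together with the independence of $\mathbf{V}_1,\dots,\mathbf{A}^{k-1}\mathbf{V}_1$ recorded above, this pins the grade down to exactly $k$.

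For the converse, suppose the grade is $k$, so $\mathbf{V}_1,\dots,\mathbf{A}^{k-1}\mathbf{V}_1$ are independent while $\mathbf{V}_1,\dots,\mathbf{A}^{k}\mathbf{V}_1$ are dependent. A breakdown at some step $j<k$ would, by the forward implication just established, force the grade to equal $j<k$, a contradiction; hence the algorithm reaches step $k$ and $\{\mathbf{V}_1,\dots,\mathbf{V}_k\}$ is an orthonormal basis of $\mathcal{K}_k(\mathbf{A},\mathbf{V}_1)$. In a dependence relation $\sum_{i=0}^{k}\mathbf{A}^{i}\mathbf{V}_1\bm{\alpha}_i=\mathbf{0}$ the leading coefficient $\bm{\alpha}_k$ must be nonzero (else the first $k$ terms would be dependent), and since every nonzero quaternion is invertible I can solve for $\mathbf{A}^{k}\mathbf{V}_1$ as a right linear combination of the lower powers; thus $\mathbf{A}^{k}\mathbf{V}_1\in\mathcal{K}_k(\mathbf{A},\mathbf{V}_1)$ and $\mathbf{A}\mathbf{V}_k\in\mathbf{A}\,\mathcal{K}_k(\mathbf{A},\mathbf{V}_1)\subseteq\mathcal{K}_k(\mathbf{A},\mathbf{V}_1)$. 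Writing $\mathbf{A}\mathbf{V}_k=\sum_{i=1}^{k}\mathbf{V}_i\bm{\beta}_i$ and pairing with $\mathbf{V}_l$, the right linearity of the inner product and the orthonormality \eqref{3-6} give $\bm{\beta}_l=\langle\mathbf{A}\mathbf{V}_k,\mathbf{V}_l\rangle=\mathbf{h}_{lk}$, whence $\mathbf{W}_k=\mathbf{A}\mathbf{V}_k-\sum_{i=1}^k\mathbf{V}_i\mathbf{h}_{ik}=\mathbf{0}$ and $\mathbf{h}_{k+1,k}=0$: the algorithm breaks down at step $k$.

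I expect the only genuine care to be needed in the noncommutative bookkeeping: one cannot identify $\mathcal{K}_k(\mathbf{A},\mathbf{V}_1)$ with a space of polynomials in $\mathbf{A}$, so each passage between ``$\mathbf{A}\mathbf{V}_k$ lies in the span'' and ``its orthonormal coordinates are the computed $\mathbf{h}_{ik}$'' must route through right linearity and \eqref{3-6}, and the step converting linear dependence into an explicit expression for $\mathbf{A}^{k}\mathbf{V}_1$ relies on the invertibility of the nonzero leading quaternion coefficient rather than on ordinary scalar division.
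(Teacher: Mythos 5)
Your proof is correct. The paper omits its own argument (deferring to \cite{Jia4}), and what you have written is precisely the standard Arnoldi breakdown chain --- $\mathbf{h}_{k+1,k}=0 \Leftrightarrow \mathbf{A}\mathbf{V}_k\in\mathcal{K}_k \Leftrightarrow \mathbf{A}$-invariance of $\mathcal{K}_k \Leftrightarrow$ grade $=k$ --- that the cited reference uses, with the two quaternion-specific points (right linearity of the inner product against orthonormality, and invertibility of the nonzero leading coefficient in the dependence relation) handled correctly.
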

\begin{proof}
The proof is similar to those given by \cite{Jia4} and is omitted.
\end{proof}

It is remarkable that Algorithm \ref{algorithm-1} based on the Gram-Schmidt process is not reliable numerically. To overcome this issue, the algorithm by the modified Gram-Schmidt alternative is as follows.
\bigskip
\begin{breakablealgorithm}
	\caption{Modified Global Quaternion Arnoldi Method}\label{algorithm-2}
	\begin{algorithmic}[1]
		 \STATE Choose an initial guess $\mathbf{V}_1$ such that $\|\mathbf{V}_1\|=1$.\\
		\STATE 	For $j=1,2,\cdots,k$, Do:\\
	\STATE Compute $\mathbf{W}_j=\mathbf{A}\mathbf{V}_j$\\
\STATE For $i=1,2,\cdots,j$, Do:
		\STATE  Compute $\mathbf{h}_{ij}=\langle\mathbf{W}_j,\mathbf{V}_i\rangle=\mbox{tr}( \mathbf{V}_i^*\mathbf{W}_j)$\\
		\STATE  Compute $\mathbf{W}_j=\mathbf{W}_j-\sum_{i=1}^j\mathbf{V}_i\mathbf{h}_{ij}$\\
  	\STATE End Do\\
		\STATE $\mathbf{h}_{j+1,j}=\|\mathbf{W}_j\|$. If $\mathbf{h}_{j+1,j}=0$ Stop\\
		\STATE $\mathbf{V}_{j+1}=\mathbf{W}_{j}/\mathbf{h}_{j+1,j}$\\
		\STATE End Do\\
	\end{algorithmic}
\end{breakablealgorithm}
\bigskip
In exact arithmetic, the above algorithm performs
much more reliable than Algorithm \ref{algorithm-1} with the presence of round-off, even though they are numerically equivalent.
 Compared with Algorithm \ref{algorithm-1}, this fact shows that Algorithm \ref{algorithm-2} is more popular in practice. 

\section{Global quaternion FOM and GMRES methods}\label{section-4}

In this section, we develop the global quaternion full orthogonalization (Gl-QFOM) and global quaternion generalized minimum residual (Gl-QGMRES) methods, which are based on Algorithm \ref{algorithm-2} and orthogonal and oblique projections, for solving the block linear systems \eqref{1-1}. Then we apply the proposed methods for solving the Sylvester quaternion matrix equation.

Let $\mathbf{X}_0$ be an initial guess to Eq.\eqref{1-1} and $\mathbf{R}_0=\mathbf{B}-\mathbf{A}\mathbf{X}_0$ be its corresponding residual. We now consider the Gl-QFOM method based on the orthogonal projection, which takes $\mathcal{L}_k=\mathcal{K}_k$ with $$\mathcal{K}_k=\mathcal{K}_k(\mathbf{A},\mathbf{R}_0):=\mbox{span}\{\mathbf{R}_0,\mathbf{A}\mathbf{R}_0,\cdots,\mathbf{A}^{k-1}\mathbf{R}_0\}.$$ Let $\mathbf{V}_1=\frac{\mathbf{R}_0}{\|\mathbf{R}_0\|}$ and $\beta=\|\mathbf{R}_0\|.$ It follows from Algorithm \ref{algorithm-2} that 
$$
\mathbf{\mathcal{V}}_k^*\boxtimes\mathbf{R}_0=\mathbf{\mathcal{V}}_k^*\boxtimes(\beta\mathbf{V}_1)=\beta\mathbf{e}_1,
$$
where $\mathbf{e}_1=[1,0,\cdots,0]^T\in\mathbb{R}^{k}$. This method seeks an approximate solution $\mathbf{X}_k$ from the
affine subspace $\mathbf{X}_0+\mathcal{K}_k$ of dimension $k$ by imposing the condition that the new residual $\mathbf{X}_k$ is orthogonal to $\mathcal{L}_k$. In other words, the approximate solution, defined as 
\begin{equation}\label{4-0}
\mathbf{X}_k=\mathbf{X}_0+\mathbf{\mathcal{V}}_k*\mathbf{y}_k,\quad \mathbf{\mathcal{V}}_k*\mathbf{y}_k\in \mathcal{K}_k,
\end{equation}
yields
$$
\langle\mathbf{R}_0-\mathbf{A}(\mathbf{\mathcal{V}}_k*\mathbf{y}_k),\Omega\rangle=0,\quad  \forall\ \Omega\in \mathcal{K}_k,
$$
where $\mathbf{y}_k$ is a quaternion vector of order $n$.
From Eqs.\eqref{2-1}, \eqref{2-2} and \eqref{2-3}, it follows that
\begin{equation}\label{4-1}
0=\mathbf{\mathcal{V}}_k^*\boxtimes(\mathbf{R}_0-\mathbf{A}(\mathbf{\mathcal{V}}_k*\mathbf{y}_k))=\mathbf{\mathcal{V}}_k^*\boxtimes\mathbf{R}_0-(\mathbf{\mathcal{V}}_k^*\boxtimes(\mathbf{A}\mathbf{\mathcal{V}}_k))\mathbf{y}_k=\beta\mathbf{e}_1-\mathbf{H}_k\mathbf{y}_k.
\end{equation}
As a result, the  approximate solution is given by
\begin{equation}\label{4-2}
\begin{aligned}
\mathbf{X}_k&=\mathbf{X}_0+\mathbf{\mathcal{V}}_k*\mathbf{y}_k,\\
\mathbf{y}_k&=\mathbf{H}_k^{-1}(\beta\mathbf{e}_1).
\end{aligned}
\end{equation}
A vital point of the above relations is choosing a suitable parameter $k$, i.e., the dimension of $\mathcal{K}_k$. In practice, the selected parameter $k$ is desirable if the residual norm to $\mathbf{X}_k$ is available inexpensively. With this information, the method will be terminated at the appropriate step. The following theorem presents the results to this point.
\begin{theorem}\label{tho-4-1}
The residual vector of the approximate solution $\mathbf{X}_k$ obtained by Eq.\eqref{4-2} yields
\begin{equation}\label{4-3}
\mathbf{B}-\mathbf{A}\mathbf{X}_k=-\mathbf{h}_{k+1,k}\mathbf{V}_{k+1}\mathbf{y}_k(k),
\end{equation}
in which $\mathbf{y}_k(k)$ represents the last element of the quaternion vector $\mathbf{y}_k$, and, therefore,
\begin{equation}\label{4-4}
\|\mathbf{B}-\mathbf{A}\mathbf{X}_k\|=\mathbf{h}_{k+1,k}|\mathbf{y}_k(k)|.
\end{equation}
\end{theorem}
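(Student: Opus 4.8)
The plan is to compute the residual directly, substituting the definition of $\mathbf{X}_k$ from \eqref{4-2} and then collapsing the expression with the global quaternion Arnoldi relation of Proposition \ref{pro-3}. First I would write
$$\mathbf{B}-\mathbf{A}\mathbf{X}_k=(\mathbf{B}-\mathbf{A}\mathbf{X}_0)-\mathbf{A}(\mathbf{\mathcal{V}}_k*\mathbf{y}_k)=\mathbf{R}_0-(\mathbf{A}\mathbf{\mathcal{V}}_k)*\mathbf{y}_k,$$
where the last equality uses that $\mathbf{A}$ acts blockwise and on the left, so that $\mathbf{A}(\mathbf{\mathcal{V}}_k*\mathbf{y}_k)=\sum_{i=1}^k(\mathbf{A}\mathbf{V}_i)\mathbf{y}_k(i)=(\mathbf{A}\mathbf{\mathcal{V}}_k)*\mathbf{y}_k$ by \eqref{2-1} and the associativity of quaternion matrix multiplication.

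Next I would insert the first relation of \eqref{3-10}, namely $\mathbf{A}\mathbf{\mathcal{V}}_k=\mathbf{\mathcal{V}}_k*\mathbf{H}_k+[0^{n\times m},\cdots,0^{n\times m},\mathbf{V}_{k+1}]\mathbf{h}_{k+1,k}$, and distribute the $*$ product over this sum using \eqref{2-3}. The associativity rule $(\mathbf{\mathcal{V}}_k*\mathbf{H}_k)*\mathbf{y}_k=\mathbf{\mathcal{V}}_k*(\mathbf{H}_k\mathbf{y}_k)$ turns the first term into $\mathbf{\mathcal{V}}_k*(\mathbf{H}_k\mathbf{y}_k)$, while the tail block, having only its last entry nonzero, contributes $\mathbf{h}_{k+1,k}\mathbf{V}_{k+1}\mathbf{y}_k(k)$ after the $*$ product with $\mathbf{y}_k$. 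Writing $\mathbf{R}_0=\beta\mathbf{V}_1=\mathbf{\mathcal{V}}_k*(\beta\mathbf{e}_1)$ and invoking the defining Galerkin condition $\mathbf{H}_k\mathbf{y}_k=\beta\mathbf{e}_1$ from \eqref{4-2} (equivalently \eqref{4-1}), the interior term cancels,
$$\mathbf{B}-\mathbf{A}\mathbf{X}_k=\mathbf{\mathcal{V}}_k*(\beta\mathbf{e}_1-\mathbf{H}_k\mathbf{y}_k)-\mathbf{h}_{k+1,k}\mathbf{V}_{k+1}\mathbf{y}_k(k)=-\mathbf{h}_{k+1,k}\mathbf{V}_{k+1}\mathbf{y}_k(k),$$
which is precisely \eqref{4-3}.

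For the norm identity \eqref{4-4} I would take the Frobenius norm of this rank-one tail. The one genuinely quaternionic point, and the step I expect to be the main obstacle, is that $\mathbf{y}_k(k)$ is a quaternion scalar sitting on the \emph{right} of $\mathbf{V}_{k+1}$, so noncommutativity forbids pulling it out of the norm naively; instead I would compute $\langle\mathbf{V}_{k+1}\bm{\eta},\mathbf{V}_{k+1}\bm{\eta}\rangle$ for $\bm{\eta}=\mathbf{y}_k(k)$ using the right quaternionic Hilbert space axioms. Right linearity in the first slot gives $\langle\mathbf{V}_{k+1}\bm{\eta},\mathbf{V}_{k+1}\bm{\eta}\rangle=\langle\mathbf{V}_{k+1},\mathbf{V}_{k+1}\bm{\eta}\rangle\bm{\eta}$, and quaternionic hermiticity together with $\langle\mathbf{V}_{k+1},\mathbf{V}_{k+1}\rangle=\|\mathbf{V}_{k+1}\|^2=1$ (from \eqref{3-6}) reduces $\langle\mathbf{V}_{k+1},\mathbf{V}_{k+1}\bm{\eta}\rangle$ to $\overline{\bm{\eta}}$, so that $\|\mathbf{V}_{k+1}\bm{\eta}\|^2=\overline{\bm{\eta}}\bm{\eta}=|\bm{\eta}|^2$. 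Finally, since $\mathbf{h}_{k+1,k}=\|\mathbf{W}_k\|$ is a nonnegative real scalar it commutes with $\mathbf{V}_{k+1}$ and factors out of the norm as its absolute value, yielding $\|\mathbf{B}-\mathbf{A}\mathbf{X}_k\|=\mathbf{h}_{k+1,k}|\mathbf{y}_k(k)|$. The remaining bookkeeping, namely verifying the blockwise action of $\mathbf{A}$ and the distributivity of $*$, is routine given \eqref{2-1}--\eqref{2-3}.
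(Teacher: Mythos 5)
Your proposal is correct and follows essentially the same route as the paper's proof: substitute $\mathbf{X}_k=\mathbf{X}_0+\mathbf{\mathcal{V}}_k*\mathbf{y}_k$, insert the Arnoldi relation \eqref{3-10}, distribute the $*$ product via \eqref{2-3}, and cancel the interior term using $\mathbf{H}_k\mathbf{y}_k=\beta\mathbf{e}_1$. You are merely more explicit than the paper on two points it leaves implicit, namely the cancellation $\mathbf{R}_0-\mathbf{\mathcal{V}}_k*(\mathbf{H}_k\mathbf{y}_k)=\mathbf{\mathcal{V}}_k*(\beta\mathbf{e}_1-\mathbf{H}_k\mathbf{y}_k)=0$ and the right-Hilbert-space computation showing $\|\mathbf{V}_{k+1}\bm{\eta}\|=|\bm{\eta}|$, both of which are correct.
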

\begin{proof}
   From Eqs. \eqref{2-3}, \eqref{3-10} and \eqref{4-2}, it follows that
   $$
   \begin{aligned}
   \mathbf{B}-\mathbf{A}\mathbf{X}_k&=\mathbf{R}_0-(\mathbf{A}\mathbf{\mathcal{V}}_k)*\mathbf{y}_k\\
   &=\mathbf{R}_0-(\mathbf{\mathcal{V}}_k*\mathbf{H}_k+[0^{n\times m},\cdots,0^{n\times m},\mathbf{V}_{k+1}]\mathbf{h}_{k+1,k})*\mathbf{y}_k\\
   &=\mathbf{R}_0-\mathbf{\mathcal{V}}_k*(\mathbf{H}_k\mathbf{y}_k)-\mathbf{h}_{k+1,k}[0^{n\times m},\cdots,0^{n\times m},\mathbf{V}_{k+1}]*\mathbf{y}_k,\\
   &=-\mathbf{h}_{k+1,k}\mathbf{V}_{k+1}\mathbf{y}_k(k).
   \end{aligned}$$
   The second term follows immediately by taking the norm on both sides of Eq.\eqref{4-3}.
\end{proof}

Based on this approach and Algorithm \ref{algorithm-2}, a method, termed the global quaternion full orthogonalization method (Gl-QFOM), is described as follows.
\bigskip
\begin{breakablealgorithm}
	\caption{ Global Quaternion Full Orthogonalization Method}\label{algorithm-3}
	\begin{algorithmic}[1]
		 \STATE Choose an initial guess $\mathbf{X}_0$, and compute $\mathbf{R}_0=\mathbf{B}-\mathbf{A}\mathbf{X}_0$, $\beta=\|\mathbf{R}_0\|$ and $\mathbf{V}_1=\mathbf{R}_0/\beta$.\\
		\STATE 	For $j=1,2,\cdots$, Do:\\
	\STATE Compute $\mathbf{W}_j=\mathbf{A}\mathbf{V}_j$\\
\STATE For $i=1,2,\cdots,j$, Do:
		\STATE  Compute $\mathbf{h}_{ij}=\langle\mathbf{W}_j,\mathbf{V}_i\rangle=\mbox{tr}( \mathbf{V}_i^*\mathbf{W}_j)$\\
		\STATE  Compute $\mathbf{W}_j=\mathbf{W}_j-\sum_{i=1}^j\mathbf{V}_i\mathbf{h}_{ij}$\\
  	\STATE End Do\\
		\STATE $\mathbf{h}_{j+1,j}=\|\mathbf{W}_j\|$. If $\mathbf{h}_{j+1,j}=0$ Stop\\
		\STATE  Solve the quaternion linear systems $\mathbf{H}_j\mathbf{y}_j=\beta\mathbf{e}_1$, i.e., $\mathbf{y}_j=\beta\mathbf{H}_j^{-1}\mathbf{e}_1$
  \STATE If  $\mathbf{h}_{j+1,j}|\mathbf{y}_j(j)|/\beta<\epsilon$, then stop
  \STATE $\mathbf{V}_{j+1}=\mathbf{W}_{j}/\mathbf{h}_{j+1,j}$\\
		\STATE End Do\\
  \STATE Compute the approximate solution $\mathbf{X}_j=\mathbf{X}_0+\mathbf{\mathcal{V}}_j*\mathbf{y}_j.$
	\end{algorithmic}
\end{breakablealgorithm}
\bigskip

Observe that the main cost of Algorithm \ref{algorithm-3} is to solve 
\begin{equation}\label{4-5}
\mathbf{H}_j\mathbf{y}_j=\beta\mathbf{e}_1
\end{equation}
with the iterations increasing. As shown in \cite{Li0}, by the
generalized quaternion Givens rotations, we can transform the upper Hessenberg quaternion matrix $\mathbf{H}_j$ to an upper triangular quaternion matrix, such that the computational operations for obtaining $\mathbf{y}_j$ are greatly reduced.
Suppose that $\mathbf{h}_{i+1,i}\neq 0$ and  $\mathbf{r}_{ii}=\|\left[\begin{array}{ccccc}
	\mathbf{h}_{ii}&
	\mathbf{h}_{i+1,i}\\
\end{array}\right]^T\|\neq 0$ for $i=1,\cdots,j-1.$ Then the $i$-th generalized quaternion Givens rotation \cite{Jia3} is defined as 
\begin{equation}\label{4-6}
\mathbf{G}_i=\left[\begin{array}{ccccc}
	\mathbf{I}^{(i-1)\times(i-1)}&&&\\	&\mathbf{g}_{11}^{(i)}&\mathbf{g}_{12}^{(i)}&\\
	&\mathbf{g}_{21}^{(i)}&\mathbf{g}_{22}^{(i)}&\\
	&&&\mathbf{I}^{(j-i-1)\times (j-i-1)}\\
\end{array}\right]\in\mathbb{Q}^{j\times j},
\end{equation}
in which
$$
\mathbf{g}_{11}^{(i)}=\frac{\mathbf{h}_{ii}}{\mathbf{r}_{ii}},\ \mathbf{g}_{21}^{(i)}=\frac{\mathbf{h}_{i+1,i}}{\mathbf{r}_{ii}},
$$
$$\left\{
 \begin{aligned}
\mathbf{g}_{12}^{(i)}&=|\mathbf{g}_{21}^{(i)}|,\  \mathbf{g}_{22}^{(i)}=-\frac{\mathbf{g}_{21}^{(i)}}{|\mathbf{g}_{21}^{(i)}|}(\mathbf{g}_{11}^{(i)})^*,\ \ \mbox{if}\  |\mathbf{h}_{ii}|\leq |\mathbf{h}_{i+1,i}|,\\
\mathbf{g}_{22}^{(i)}&=|\mathbf{g}_{11}^{(i)}|,\  \mathbf{g}_{12}^{(i)}=-\frac{\mathbf{g}_{11}^{(i)}}{|\mathbf{g}_{11}^{(i)}|}(\mathbf{g}_{21}^{(i)})^*,\ \ \mbox{if}\ |\mathbf{h}_{ii}|>|\mathbf{h}_{i+1,i}|.\\
 		\end{aligned}
 \right.$$
Consequently, we can see that Eq.\eqref{4-5} reduces to the following linear systems
\begin{equation}\label{4-7}
\overline{\mathbf{R}}_j\mathbf{y}_j=\mathbf{Q}_{j-1}^*(\beta\mathbf{e}_1),
\end{equation} 
in which $\mathbf{Q}_{j-1}=\mathbf{G}_1\mathbf{G}_2\cdots \mathbf{G}_{j-1}$, and $\overline{\mathbf{R}}_j=\mathbf{Q}_{j-1}^*\mathbf{H}_j\in \mathbb{Q}^{j\times j}$ is an upper triangular form whose entries $\mathbf{r}_{ss}$ and $\mathbf{r}_{jj}$ are positive real numbers and quaternion scalar, $s=1,\cdots,j-1$, respectively. This gives rise to a practical Gl-QFOM method for solving the original linear systems. 
\bigskip
\begin{breakablealgorithm}
	\caption{ Practical Global Quaternion Full Orthogonalization Method}\label{algorithm-4}
	\begin{algorithmic}[1]
		 \STATE Choose an initial guess $\mathbf{X}_0$, and compute $\mathbf{R}_0=\mathbf{B}-\mathbf{A}\mathbf{X}_0$, $\beta=\|\mathbf{R}_0\|$ and $\mathbf{V}_1=\mathbf{R}_0/\beta$, $\mathbf{u}=\beta\mathbf{e}_1$.\\
		\STATE 	For $j=1,2,\cdots$, Do:\\
	\STATE Compute $\mathbf{W}_j=\mathbf{A}\mathbf{V}_j$\\
\STATE For $i=1,2,\cdots,j$, Do:
		\STATE  Compute $\mathbf{h}_{ij}=\langle\mathbf{W}_j,\mathbf{V}_i\rangle=\mbox{tr}( \mathbf{V}_i^*\mathbf{W}_j)$\\
		\STATE  Compute $\mathbf{W}_j=\mathbf{W}_j-\sum_{i=1}^j\mathbf{V}_i\mathbf{h}_{ij}$\\
  	\STATE End Do\\
\STATE For $i=1,\cdots,j-1$, Do:
$$\left[\begin{array}{ccccc}
	\mathbf{h}_{i,j}\\
	\mathbf{h}_{i+1,j}\\
\end{array}\right]=\left[\begin{array}{ccccc}
\mathbf{g}_{11}^{(i)}&\mathbf{g}_{12}^{(i)}\\
\mathbf{g}_{21}^{(i)}&\mathbf{g}_{22}^{(i)}\\
\end{array}\right]^*\left[\begin{array}{ccccc}
	\mathbf{h}_{i,j}\\
	\mathbf{h}_{i+1,j}\\
\end{array}\right]$$
\STATE End Do
\STATE { $\mathbf{y}_j(j)=\mathbf{h}_{jj}^{-1}\mathbf{u}(j)$, i.e., $|\mathbf{y}_j(j)|=|\mathbf{u}(j)|/|\mathbf{h}_{jj}|$}
\STATE $\mathbf{h}_{j+1,j}=\|\mathbf{W}_j\|$. If $\mathbf{h}_{j+1,j}=0$ Stop\\
 \STATE If  $\mathbf{h}_{j+1,j}|\mathbf{y}_j(j)|/\beta<\epsilon$, then stop
  \STATE $\mathbf{V}_{j+1}=\mathbf{W}_{j}/\mathbf{h}_{j+1,j}$\\
\STATE If $|\mathbf{h}_{jj}|\leq |\mathbf{h}_{j+1,j}|$, then
\STATE $\mathbf{g}_{12}^{(j)}=|\mathbf{g}_{21}^{(j)}|,\  \mathbf{g}_{22}^{(j)}=-\frac{\mathbf{g}_{21}^{(j)}}{|\mathbf{g}_{21}^{(i)}|}(\mathbf{g}_{11}^{(j)})^*$, where $\mathbf{g}_{11}^{(j)}=\frac{\mathbf{h}_{jj}}{\mathbf{r}_{jj}},\ \mathbf{g}_{21}^{(j)}=\frac{\mathbf{h}_{j+1,j}}{\mathbf{r}_{jj}}$
\STATE Else
\STATE$\mathbf{g}_{22}^{(j)}=|\mathbf{g}_{11}^{(j)}|,\  \mathbf{g}_{12}^{(j)}=-\frac{\mathbf{g}_{11}^{(j)}}{|\mathbf{g}_{11}^{(j)}|}(\mathbf{g}_{21}^{(j)})^*$, where $\mathbf{g}_{11}^{(j)}=\frac{\mathbf{h}_{jj}}{\mathbf{r}_{jj}},\ \mathbf{g}_{21}^{(j)}=\frac{\mathbf{h}_{j+1,j}}{\mathbf{r}_{jj}}$
\STATE
\STATE $\mathbf{h}_{jj}=\|\left[\begin{array}{ccccc}
	\mathbf{h}_{jj}&
	\mathbf{h}_{j+1,j}\\
\end{array}\right]^T\|_2$, $\mathbf{h}_{j+1,j}=0$
\STATE $$\left[\begin{array}{ccccc}
	\mathbf{u}(j)\\
	\mathbf{u}(j+1)\\
\end{array}\right]=\left[\begin{array}{ccccc}
	\mathbf{g}_{11}^{(j)}&\mathbf{g}_{12}^{(j)}\\
	\mathbf{g}_{21}^{(j)}&\mathbf{g}_{22}^{(j)}\\
\end{array}\right]^*\left[\begin{array}{ccccc}
	\mathbf{u}(j)\\
	0\\
\end{array}\right]$$
\STATE End Do
\STATE  Solve the quaternion linear systems $\overline{\mathbf{R}}_j\mathbf{y}_j=\mathbf{u}$
  \STATE Compute the approximate solution $\mathbf{X}_j=\mathbf{X}_0+\mathbf{\mathcal{V}}_j*\mathbf{y}_j.$
	\end{algorithmic}
\end{breakablealgorithm}
\bigskip

From Theorem \ref{tho-4-1}, we can see that if the Gl-QAP of Algorithm \ref{algorithm-4} breaks down at $j$, i.e., $\mathbf{h}_{j+1,j}=0$, then its residual norm $\|\mathbf{B}-\mathbf{A}\mathbf{X}_j\|=0$, i.e., the approximate solution $\mathbf{X}_j$ is exact. The Gl-QAP will not loop infinitely since the dimension of $\mathcal{K}_j(\mathbf{A},\mathbf{R}_0)$ is at most $4mn$. This gives rise to the following theorem.
\begin{theorem}\label{tho-4-0}
Suppose that $\mathbf{X}^*$ is an exact solution of the original linear systems Eq.\eqref{1-1}. Then an iterative sequence generated by Algorithm \ref{algorithm-4} converges to $\mathbf{X}^*$ within at most $4mn$ iterations in the presence of round-off errors.
\end{theorem}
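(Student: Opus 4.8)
The plan is to derive finite termination from three ingredients already in hand: the parametrization of the iterates, the breakdown characterization of Proposition \ref{pro-4}, and the residual identity of Theorem \ref{tho-4-1}. First I would recall that every iterate of Algorithm \ref{algorithm-4} has the form $\mathbf{X}_j=\mathbf{X}_0+\mathbf{\mathcal{V}}_j*\mathbf{y}_j$ with $\mathbf{\mathcal{V}}_j*\mathbf{y}_j\in\mathcal{K}_j(\mathbf{A},\mathbf{R}_0)$, the columns $\mathbf{V}_1,\dots,\mathbf{V}_j$ being generated by the (modified) global quaternion Arnoldi recurrence embedded in lines 3--7. The statement then reduces to two claims: that this recurrence must break down, i.e. $\mathbf{h}_{j+1,j}=0$, at some step $j\le 4mn$, and that such a breakdown yields the exact solution.

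For the first claim I would bound the grade $\bm{\mu}$ of $\mathbf{R}_0$ with respect to $\mathbf{A}$. If $\mathbf{R}_0,\mathbf{A}\mathbf{R}_0,\dots,\mathbf{A}^{k-1}\mathbf{R}_0$ are right-hand-side linearly independent, then in particular no nontrivial \emph{real} combination of them vanishes; applying the injective $\mathbb{R}$-linear map $\mathbf{W}\mapsto\mathcal{R}(\mathbf{W})_c$ shows that their real first block columns are linearly independent in $\mathbb{R}^{4n\times m}$, a space of real dimension $4mn$. Hence $k\le 4mn$, so $\bm{\mu}\le 4mn$ is finite. By Proposition \ref{pro-4} applied with $\mathbf{V}_1=\mathbf{R}_0/\beta$ (scaling does not change the grade), the Arnoldi recurrence in Algorithm \ref{algorithm-4} breaks down exactly at step $j=\bm{\mu}\le 4mn$, and $\mathcal{K}_{\bm{\mu}}(\mathbf{A},\mathbf{R}_0)$ is then invariant under $\mathbf{A}$.

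For the second claim I would invoke Theorem \ref{tho-4-1}: at the breakdown step $\mathbf{h}_{\bm{\mu}+1,\bm{\mu}}=0$ gives $\|\mathbf{B}-\mathbf{A}\mathbf{X}_{\bm{\mu}}\|=\mathbf{h}_{\bm{\mu}+1,\bm{\mu}}\,|\mathbf{y}_{\bm{\mu}}(\bm{\mu})|=0$, so $\mathbf{A}\mathbf{X}_{\bm{\mu}}=\mathbf{B}$; since $\mathbf{A}$ is invertible, \eqref{1-1} has a unique solution and therefore $\mathbf{X}_{\bm{\mu}}=\mathbf{X}^*$. To guarantee that $\mathbf{X}_{\bm{\mu}}$ is actually produced, I would note that the $\mathbf{A}$-invariance of $\mathcal{K}_{\bm{\mu}}(\mathbf{A},\mathbf{R}_0)$ together with the invertibility of $\mathbf{A}$ makes $\mathbf{H}_{\bm{\mu}}=\mathbf{\mathcal{V}}_{\bm{\mu}}^*\boxtimes(\mathbf{A}\mathbf{\mathcal{V}}_{\bm{\mu}})$ nonsingular, so the triangular solve of line 24 (equivalently $\mathbf{y}_{\bm{\mu}}=\beta\mathbf{H}_{\bm{\mu}}^{-1}\mathbf{e}_1$) is well defined.

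The step I expect to be the main obstacle is the grade bound, because right-hand-side ($\mathbb{Q}$-)linear independence is strictly stronger than real linear independence and the two must not be conflated. The clean resolution is the implication used above --- right-$\mathbb{Q}$ independence forbids every quaternion relation and hence every real one, so passing through the injective map $\mathcal{R}(\cdot)_c$ transfers the chain into the $4mn$-dimensional real space $\mathbb{R}^{4n\times m}$ and caps its length at $4mn$. Because this bound is purely a matter of dimension counting, it is insensitive to floating-point perturbations, which is the sense in which the $4mn$ guarantee persists under finite-precision arithmetic. The remaining pieces --- invertibility of $\mathbf{H}_{\bm{\mu}}$ at a lucky breakdown and uniqueness of the solution from the invertibility of $\mathbf{A}$ --- are then routine, and the theorem follows.
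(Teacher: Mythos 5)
Your argument is correct and is essentially the one the paper intends: the paper omits the formal proof (deferring to Theorem~4 of the cited QFOM paper), but the paragraph immediately preceding the theorem sketches exactly your two ingredients --- breakdown implies a zero residual via Theorem \ref{tho-4-1}, and the dimension of $\mathcal{K}_j(\mathbf{A},\mathbf{R}_0)$ is capped at $4mn$ so breakdown must occur. Your dimension count via the injective real-linear map $\mathbf{W}\mapsto\mathcal{R}(\mathbf{W})_c$ and the nonsingularity of $\mathbf{H}_{\bm{\mu}}$ at a lucky breakdown are both sound (indeed, right-$\mathbb{Q}$-independence would even give the sharper bound $mn$), so the proposal fills in the omitted details faithfully.
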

\begin{proof}
The proof is similar to Theorem 4 given by \cite{Li0} and is omitted.
\end{proof}

In what follows, by taking the $\mathcal{K}_j=\mathcal{K}_j(\mathbf{A},\mathbf{R}_0)$ and $\mathcal{L}_k=\mathbf{A}\mathcal{K}_j$, we derive the global quaternion generalized minimum residual (Gl-QGMRES) method for solving Eq.\eqref{1-1}. Such a method based on the oblique technique minimizes the residual norm at each step. As noted earlier, the approximate solution $\mathbf{X}_j$ in $\mathbf{X}_0+\mathcal{K}_j$ can be written as Eq.\eqref{4-0}. The Gl-QGMRES approximation is to search the unique quaternion matrix $\mathbf{X}_j$ yielding
$$\argmin_{\mathbf{X}_j\in\mathbf{X}_0+\mathcal{K}_j}\|\mathbf{B}-\mathbf{A}\mathbf{X}_j\|.$$
According to Eqs. \eqref{3-9} and \eqref{3-10}, it follows that
$$\begin{aligned}
\|\mathbf{B}-\mathbf{A}\mathbf{X}_j\|&=\|\mathbf{B}-\mathbf{A}(\mathbf{X}_0+\mathbf{\mathcal{V}}_j*\mathbf{y}_j)\|\\
&=\|\beta\mathbf{V}_1-\mathbf{\mathcal{V}}_{j+1}*(\overline{\mathbf{H}}_j\mathbf{y}_j)\|\\
&=\|\mathbf{\mathcal{V}}_{j+1}*(\beta\mathbf{\overline{e}}_1-\overline{\mathbf{H}}_j\mathbf{y}_j)\|\\
&=\|\beta\mathbf{\overline{e}}_1-\overline{\mathbf{H}}_j\mathbf{y}_j\|.
\end{aligned}$$
Therefore, the above minimization problem can be rewritten as
\begin{equation}\label{4-8}
\begin{aligned}
\mathbf{X}_j&=\mathbf{X}_0+\mathbf{\mathcal{V}}_j*\mathbf{y}_j, \mbox{where}\\
\mathbf{y}_j&=\argmin_{y}\|\beta\mathbf{\overline{e}}_1-\overline{\mathbf{H}}_j\mathbf{y}_j\|
\end{aligned}
\end{equation}
with $\mathbf{\overline{e}}_1=[1,0,\cdots,0]\in\mathbb{R}^{j+1}$.
 Since we only need to obtain the solution of a
$(j+1)\times j$ least-squares problem where $j$ is typically small, the minimizer $\mathbf{y}_j$ is inexpensive to compute. Let $\mathbf{G}_i\in\mathbb{Q}^{(j+1)\times (j+1)}$ be defined as Eq.\eqref{4-7}, $i=1,\cdots,j$ and $\mathbf{Q}_j=\mathbf{G}_1\mathbf{G}_2\cdots\mathbf{G}_j$. Then, from \cite{Jia3}, the QR factorization of $\overline{\mathbf{H}}_j$ is given by
$$
\overline{\mathbf{H}}_j=\mathbf{Q}_j^*\widetilde{\mathbf{R}}_{j+1},
$$
where $\widetilde{\mathbf{R}}_{j+1}=\left[\begin{array}{ccccc}
\widetilde{\mathbf{R}}_{j}\\
0\\
\end{array}\right]\in\mathbb{Q}^{(j+1)\times j},$
and $\widetilde{\mathbf{R}}_{j}$ obtained from $\widetilde{\mathbf{R}}_{j+1}$ by deleting its last row, is an $j\times j$ upper triangular quaternion matrix whose diagonal entries are positive real numbers. The above formulation results in
\begin{equation}\label{4-10}
\begin{aligned}
\|\beta\mathbf{\overline{e}}_1-\overline{\mathbf{H}}_j\mathbf{y}_j\|&=\|\beta\mathbf{\overline{e}}_1-\mathbf{Q}_j^*\widetilde{\mathbf{R}}_{j+1}\mathbf{y}_j\|\\
&=\|\beta\mathbf{q}_1-\left[\begin{array}{ccccc}
\widetilde{\mathbf{R}}_{j}\\
0\\
\end{array}\right]\mathbf{y}_j\|,
\end{aligned}
\end{equation}
where $\mathbf{q}_1$ of order $j+1$ denotes the first column of $\mathbf{Q}_j^*$. This shows that the desired $\mathbf{y}_j$ is obtained by
$$\widetilde{\mathbf{R}}_{j}\mathbf{y}_j=\beta\mathbf{q}_1(1:j),$$
in which $\mathbf{q}_1(1:j)$ denotes the first $j$ elements of $\mathbf{q}_1$. From the above discussions, the following theorem gives the norm of the residual vector at each iteration.

\begin{theorem}\label{tho-4-2}
Let $\mathbf{X}_j$ be an approximate solution generated by the Gl-QGMRES method at step $j$. Then we have
\begin{equation}\label{4-11}
\|\mathbf{B}-\mathbf{A}\mathbf{X}_j\|=\beta|\mathbf{q}_1(j+1)|,
\end{equation}
where $\mathbf{q}_1(j+1)$ is the last element of $\mathbf{q}_1$.
\end{theorem}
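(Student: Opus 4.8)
The plan is to build directly on the QR factorization of $\overline{\mathbf{H}}_j$ that has already been established, namely $\overline{\mathbf{H}}_j=\mathbf{Q}_j^*\widetilde{\mathbf{R}}_{j+1}$ with $\widetilde{\mathbf{R}}_{j+1}=\left[\widetilde{\mathbf{R}}_j;\,0\right]^T$ having a zero last row. The key observation is that the minimization in Eq.\eqref{4-8} has, by Eq.\eqref{4-10}, been reduced to minimizing $\|\beta\mathbf{q}_1-\widetilde{\mathbf{R}}_{j+1}\mathbf{y}_j\|$ over the quaternion vector $\mathbf{y}_j$, where $\mathbf{q}_1$ of order $j+1$ is the first column of $\mathbf{Q}_j^*$. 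The residual norm of the Gl-QGMRES approximation is precisely the optimal value of this least-squares problem.

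First I would split the vector $\beta\mathbf{q}_1\in\mathbb{Q}^{j+1}$ into its first $j$ components and its last component, writing $\beta\mathbf{q}_1=\left[\beta\mathbf{q}_1(1:j);\,\beta\mathbf{q}_1(j+1)\right]^T$. Since $\widetilde{\mathbf{R}}_{j+1}$ has a zero last row, the product $\widetilde{\mathbf{R}}_{j+1}\mathbf{y}_j$ has its last component equal to zero for every $\mathbf{y}_j$; its first $j$ components are $\widetilde{\mathbf{R}}_j\mathbf{y}_j$. Therefore the squared residual norm decouples as
\begin{equation}
\|\beta\mathbf{q}_1-\widetilde{\mathbf{R}}_{j+1}\mathbf{y}_j\|^2=\|\beta\mathbf{q}_1(1:j)-\widetilde{\mathbf{R}}_j\mathbf{y}_j\|^2+\beta^2|\mathbf{q}_1(j+1)|^2.
\end{equation}
The second summand is independent of $\mathbf{y}_j$, so minimizing over $\mathbf{y}_j$ amounts to driving the first summand to zero. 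Because $\widetilde{\mathbf{R}}_j$ is upper triangular with positive real diagonal entries, it is invertible, and the choice $\mathbf{y}_j=\widetilde{\mathbf{R}}_j^{-1}\bigl(\beta\mathbf{q}_1(1:j)\bigr)$ — exactly the vector produced by the algorithm via back-substitution — makes the first summand vanish. Substituting this optimal $\mathbf{y}_j$ leaves the residual norm equal to $\beta|\mathbf{q}_1(j+1)|$, which is the claim.

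The one point requiring care is justifying that the squared Frobenius (quaternionic Hilbert-space) norm splits additively across the partition of the vector, i.e.\ that there is no cross term. This follows from the positivity and quaternionic hermiticity of the inner product recalled in Section \ref{section-2}: for a stacked vector the squared norm is the sum of the squared norms of the blocks, since the inner product is block-diagonal across disjoint index sets. Concretely, the residual vector in the last coordinate is $\beta\mathbf{q}_1(j+1)$, which contributes $|\beta\mathbf{q}_1(j+1)|^2$ regardless of $\mathbf{y}_j$, while the remaining coordinates can be annihilated. I expect the main obstacle to be purely notational rather than conceptual: one must keep the right-hand-side scalar multiplication and the noncommutativity of $\mathbb{Q}$ straight when manipulating $\widetilde{\mathbf{R}}_j\mathbf{y}_j$ and when invoking the invertibility of the upper triangular $\widetilde{\mathbf{R}}_j$, but since its diagonal entries are real and positive the back-substitution is well defined and the argument goes through unchanged from the complex case.
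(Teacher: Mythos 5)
Your argument is correct and is exactly the one the paper intends: the paper's proof of Theorem \ref{tho-4-2} is simply the remark that it is ``straightforward,'' relying on the reduction already carried out in Eq.\eqref{4-10}, and your block-splitting of $\|\beta\mathbf{q}_1-\widetilde{\mathbf{R}}_{j+1}\mathbf{y}_j\|^2$ with the zero last row of $\widetilde{\mathbf{R}}_{j+1}$ and the invertibility of $\widetilde{\mathbf{R}}_j$ is precisely the omitted computation. Your care about the absence of cross terms (since $\|z\|^2=\sum_i|z_i|^2$ for quaternion vectors) and about right scalar multiplication is appropriate and the argument goes through.
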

\begin{proof}
    The proof of this is straightforward.
\end{proof}

Now we develop the practical global quaternion generalized minimal residual method as follows.

\bigskip
\begin{breakablealgorithm}
	\caption{ Practical Global Quaternion Generalized Minimal Residual Method }\label{algorithm-5}
	\begin{algorithmic}[1]
		 \STATE Choose an initial guess $\mathbf{X}_0$, and compute $\mathbf{R}_0=\mathbf{B}-\mathbf{A}\mathbf{X}_0$, $\beta=\|\mathbf{R}_0\|$ and $\mathbf{V}_1=\mathbf{R}_0/\beta$, $\mathbf{u}=\beta\mathbf{\overline{e}}_1$.\\
		\STATE 	For $j=1,2,\cdots$, Do:\\
		\STATE The quaternion Arnoldi method using lines 3-7 of Algorithm \ref{algorithm-4}.
\STATE For $i=1,\cdots,j-1$, Do:
$$\left[\begin{array}{ccccc}
	\mathbf{h}_{i,j}\\
	\mathbf{h}_{i+1,j}\\
\end{array}\right]=\left[\begin{array}{ccccc}
\mathbf{g}_{11}^{(i)}&\mathbf{g}_{12}^{(i)}\\
\mathbf{g}_{21}^{(i)}&\mathbf{g}_{22}^{(i)}\\
\end{array}\right]^*\left[\begin{array}{ccccc}
	\mathbf{h}_{i,j}\\
	\mathbf{h}_{i+1,j}\\
\end{array}\right]$$
\STATE End Do
\STATE $\mathbf{h}_{j+1,j}=\|\mathbf{W}_j\|$. If $\mathbf{h}_{j+1,j}=0$ Stop\\
  \STATE $\mathbf{V}_{j+1}=\mathbf{W}_{j}/\mathbf{h}_{j+1,j}$\\
\STATE If $|\mathbf{h}_{jj}|\leq |\mathbf{h}_{j+1,j}|$, then
\STATE $\mathbf{g}_{12}^{(j)}=|\mathbf{g}_{21}^{(j)}|,\  \mathbf{g}_{22}^{(j)}=-\frac{\mathbf{g}_{21}^{(j)}}{|\mathbf{g}_{21}^{(i)}|}(\mathbf{g}_{11}^{(j)})^*$, where $\mathbf{g}_{11}^{(j)}=\frac{\mathbf{h}_{jj}}{\mathbf{r}_{jj}},\ \mathbf{g}_{21}^{(j)}=\frac{\mathbf{h}_{j+1,j}}{\mathbf{r}_{jj}}$
\STATE Else
\STATE$\mathbf{g}_{22}^{(j)}=|\mathbf{g}_{11}^{(j)}|,\  \mathbf{g}_{12}^{(j)}=-\frac{\mathbf{g}_{11}^{(j)}}{|\mathbf{g}_{11}^{(j)}|}(\mathbf{g}_{21}^{(j)})^*$, where $\mathbf{g}_{11}^{(j)}=\frac{\mathbf{h}_{jj}}{\mathbf{r}_{jj}},\ \mathbf{g}_{21}^{(j)}=\frac{\mathbf{h}_{j+1,j}}{\mathbf{r}_{jj}}$
\STATE
\STATE $\mathbf{h}_{jj}=\|\left[\begin{array}{ccccc}
	\mathbf{h}_{jj}&
	\mathbf{h}_{j+1,j}\\
\end{array}\right]^T\|_2$, $\mathbf{h}_{j+1,j}=0$
\STATE $$\left[\begin{array}{ccccc}
	\mathbf{u}(j)\\
	\mathbf{u}(j+1)\\
\end{array}\right]=\left[\begin{array}{ccccc}
	\mathbf{g}_{11}^{(j)}&\mathbf{g}_{12}^{(j)}\\
	\mathbf{g}_{21}^{(j)}&\mathbf{g}_{22}^{(j)}\\
\end{array}\right]^*\left[\begin{array}{ccccc}
	\mathbf{u}(j)\\
	0\\
\end{array}\right]$$
\STATE If $|\mathbf{u}(j+1)|/\beta<\varepsilon$, then Stop
\STATE End Do
\STATE  Solve the quaternion linear systems $\widetilde{\mathbf{R}}_{j}\mathbf{y}_j=\mathbf{q}_1(1:j)$
  \STATE Compute the approximate solution $\mathbf{X}_j=\mathbf{X}_0+\mathbf{\mathcal{V}}_j*\mathbf{y}_j.$
	\end{algorithmic}
\end{breakablealgorithm}
\bigskip

Remarkably, when $m=1$, the proposed algorithm reduces to the QGMRES algorithm given by \cite{Jia4}. We also obtain that $\mathbf{h}_{j+1,j}=0$ if the algorithm stops at step $j$  with $\mathbf{B}-\mathbf{A}\mathbf{X}_j=\mathbf{0}.$ 
\begin{theorem}\label{tho-4-3}
Suppose that the quaternion matrix $\mathbf{A}$ is invertible. Then,
the Gl-QGMRES method breaks down at step $j$ if and only if the approximate solution $\mathbf{X}_j$ is exact.
\end{theorem}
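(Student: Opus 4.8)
The plan is to reduce both implications to the least-squares characterization behind Theorem \ref{tho-4-2}, i.e. to treat $\mathbf{X}_j$ as the minimizer over $\mathbf{y}\in\mathbb{Q}^{j}$ of $\|\mathbf{\overline{e}}_1\beta-\overline{\mathbf{H}}_j\mathbf{y}\|$, so that $\|\mathbf{B}-\mathbf{A}\mathbf{X}_j\|$ equals this minimum, and then to track how the trailing entry of the rotated right-hand side $\mathbf{u}=\mathbf{Q}_j^*(\beta\mathbf{\overline{e}}_1)$ behaves.

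For the forward implication (breakdown $\Rightarrow$ exact), I would assume $\mathbf{h}_{j+1,j}=0$, so that the last row of $\overline{\mathbf{H}}_j$ vanishes and $\overline{\mathbf{H}}_j$ reduces to its top $j\times j$ block $\mathbf{H}_j$ stacked over a zero row. The first step is to show $\mathbf{H}_j$ is invertible: with $\mathbf{h}_{j+1,j}=0$ the first relation of Proposition \ref{pro-3} collapses to $\mathbf{A}\mathbf{\mathcal{V}}_j=\mathbf{\mathcal{V}}_j*\mathbf{H}_j$, so a null vector $\mathbf{y}$ of $\mathbf{H}_j$ would give $\mathbf{A}(\mathbf{\mathcal{V}}_j*\mathbf{y})=\mathbf{\mathcal{V}}_j*(\mathbf{H}_j\mathbf{y})=\mathbf{0}$ by \eqref{2-3}; invertibility of $\mathbf{A}$ then forces $\mathbf{\mathcal{V}}_j*\mathbf{y}=\mathbf{0}$, whence $\|\mathbf{y}\|=\|\mathbf{\mathcal{V}}_j*\mathbf{y}\|=0$ by Proposition \ref{pro-2}, a contradiction. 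Taking $\mathbf{y}_j=\mathbf{H}_j^{-1}(\beta\mathbf{e}_1)$ then annihilates $\beta\mathbf{\overline{e}}_1-\overline{\mathbf{H}}_j\mathbf{y}_j$ (its $(j+1)$-st entry is already zero because $j\geq 1$), so the minimal residual is $0$ and $\mathbf{X}_j$ is exact.

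For the reverse implication I would argue the contrapositive via a residual recurrence. Writing $\mathbf{u}=\mathbf{Q}_j^*(\beta\mathbf{\overline{e}}_1)=\beta\mathbf{q}_1$ and using $\mathbf{Q}_j^*=\mathbf{G}_j^*\mathbf{Q}_{j-1}^*$, the rotation $\mathbf{G}_j$ touches only entries $j$ and $j+1$ of $\mathbf{u}$; since the $(j+1)$-st entry is still zero before $\mathbf{G}_j$ acts, one obtains $\mathbf{u}(j+1)=(\mathbf{g}_{12}^{(j)})^*\mathbf{u}(j)$. Combined with Theorem \ref{tho-4-2} this yields $r_j:=\|\mathbf{B}-\mathbf{A}\mathbf{X}_j\|=|\mathbf{g}_{12}^{(j)}|\,r_{j-1}$, where $r_{j-1}=|\mathbf{u}(j)|$ is the previous residual norm. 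A direct check from the definition of the generalized quaternion Givens rotation shows $|\mathbf{g}_{12}^{(j)}|=|\mathbf{h}_{j+1,j}|/\mathbf{r}_{jj}$ in both branches of \eqref{4-6}, so $\mathbf{g}_{12}^{(j)}\neq 0$ exactly when $\mathbf{h}_{j+1,j}\neq 0$. Thus, assuming no breakdown at step $j$, the recurrence forces $r_j>0$ as soon as $r_{j-1}>0$, i.e. $\mathbf{X}_j$ is not exact.

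The step I expect to be the main obstacle is the tacit hypothesis $r_{j-1}>0$ in the reverse direction: one must justify that reaching step $j$ means no earlier breakdown and no earlier exact hit, which the same recurrence supplies inductively, the base case being $r_0=\beta=\|\mathbf{R}_0\|>0$ whenever $\mathbf{X}_0$ is not already a solution. The only other delicate point is the bookkeeping of the conjugate-transpose convention in $\mathbf{G}_j^*$ together with the precise location of the zeroed entry, but this is routine once the identity $r_j=|\mathbf{g}_{12}^{(j)}|\,r_{j-1}$ is in place.
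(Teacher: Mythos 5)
Your proposal is correct and follows the standard argument that the paper itself only cites (its proof is omitted with a reference to Theorem 4.2 of \cite{Jia4}): the forward direction via invertibility of $\mathbf{H}_j$ from the collapsed relation $\mathbf{A}\mathbf{\mathcal{V}}_j=\mathbf{\mathcal{V}}_j*\mathbf{H}_j$ together with Proposition \ref{pro-2}, and the reverse direction via the residual recurrence $r_j=|\mathbf{g}_{12}^{(j)}|\,r_{j-1}$ with $|\mathbf{g}_{12}^{(j)}|=|\mathbf{h}_{j+1,j}|/\mathbf{r}_{jj}$. In fact you supply the details the paper leaves implicit, and your handling of the inductive hypothesis $r_{j-1}>0$ is exactly the right bookkeeping.
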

\begin{proof}
    The proof is similar to Theorem 4.2 given by \cite{Jia4} and is omitted.
\end{proof}

Next, we show how to apply Gl-QFOM and Gl-QGMRES methods for solving the Sylvester quaternion matrix equation, but do not present any theoretical results.

Consider the well-known Sylvester quaternion matrix equation \begin{equation}\label{4-12}
\mathbf{A}\mathbf{X}+\mathbf{X}\mathbf{B}=\mathbf{C},\end{equation}
where $\mathbf{A}\in \mathbb{Q}^{ n\times n},\mathbf{B}\in \mathbb{Q}^{ m\times m}$ and $\mathbf{C}\in \mathbb{Q}^{ n\times m}$ are known matrices, $\mathbf{X}\in \mathbb{Q}^{ n\times m}$ is required to be determined. Such a matrix equation has widespread applications in system and control theory \cite{He2,He3}, neural network \cite{Zhang00}, and so on. Rodman \cite{Rodman} proved that the matrix equation has a unique solution if and only if the right eigenvalues of $\mathbf{A}$ and $\mathbf{B}$ are disjoint. 

Define
$$
\mathcal{A}(\mathbf{X}):=\mathbf{A}\mathbf{X}+\mathbf{X}\mathbf{B}.
$$
It is trivial to verify that the operator $\mathcal{A}$ is a liner mapping from the matrix space $\mathbb{Q}^{ n\times m}$ to itself. This shows that Eq.\eqref{1-1} can be rewritten as
\begin{equation}\label{4-13}
\mathcal{A}(\mathbf{X})=\mathbf{C}.
\end{equation}
By replacing the matrix-matrix product $\mathbf{A}\mathbf{X}$ with $\mathcal{A}(\mathbf{X})=\mathbf{A}\mathbf{X}+\mathbf{X}\mathbf{B}$, we then utilize the practical Gl-FOM and Gl-QGMRES methods to solve Eq.\eqref{4-12}. Let $\mathbf{R}_0$ be an initial guess and  $\mathbf{R}_0=\mathbf{B}-\mathbf{A}\mathbf{X}_0-\mathbf{X}_0\mathbf{B}$ be its corresponding residual. Here the quaternion matrix Krylov subspace $\mathcal{K}_j(\mathcal{A},\mathbf{R}_0)$ is defined as $$\mathcal{K}_j(\mathcal{A},\mathbf{R}_0):=\mbox{span}\{\mathbf{R}_0,\mathcal{A}(\mathbf{R}_0),\cdots,\mathcal{A}^{j-1}(\mathbf{R}_0)\},$$
and then $\mathbf{L}_j$ in Algorithm \ref{algorithm-5} is $\mathcal{A}\mathcal{K}_j(\mathcal{A},\mathbf{R}_0)$. Noting that $\mathcal{A}^{j-1}(\mathbf{R}_0)$ is defined recursively as $\mathcal{A}(\mathcal{A}^{j-2}(\mathbf{R}_0)).$ 

Next the practical Gl-FOM and Gl-QGMRES methods for solving the Sylvester quaternion matrix equation \eqref{4-12} are described below.

\bigskip
\begin{breakablealgorithm}
	\caption{ Practical Gl-QFOM method for solving Eq.\eqref{4-12}.}\label{algorithm-6}
	\begin{algorithmic}[1]
		 \STATE Choose an initial guess $\mathbf{X}_0$, and compute $\mathbf{R}_0=\mathbf{C}-\mathbf{A}\mathbf{X}_0-\mathbf{X}_0\mathbf{B}$, $\beta=\|\mathbf{R}_0\|$ and $\mathbf{V}_1=\mathbf{R}_0/\beta$, $\mathbf{u}=\beta\mathbf{e}_1$.
		\STATE 	For $j=1,2,\cdots$, construct an orthonormal basis of the Krylov subspace $\mathcal{K}_j(\mathcal{A},\mathbf{R}_0)$ by lines 3-7 of Algorithm \ref{algorithm-4}, and then solve the quaternion linear systems $\overline{\mathbf{R}}_{j}\mathbf{y}_j=\mathbf{u}$ by lines $8-21$ of Algorithm \ref{algorithm-4}.
  \STATE Compute the approximate solution $\mathbf{X}_j=\mathbf{X}_0+\mathbf{\mathcal{V}}_j*\mathbf{y}_j.$
	\end{algorithmic}
\end{breakablealgorithm}
\begin{breakablealgorithm}
	\caption{ Practical Gl-QGMRES method for solving Eq.\eqref{4-12}.}\label{algorithm-7}
	\begin{algorithmic}[1]
		 \STATE Choose an initial guess $\mathbf{X}_0$, and compute $\mathbf{R}_0=\mathbf{C}-\mathbf{A}\mathbf{X}_0-\mathbf{X}_0\mathbf{B}$, $\beta=\|\mathbf{R}_0\|$ and $\mathbf{V}_1=\mathbf{R}_0/\beta$, $\mathbf{u}=\beta\mathbf{\overline{e}}_1$.
		\STATE 	For $j=1,2,\cdots$, construct an orthonormal basis of the Krylov subspace $\mathcal{K}_j(\mathcal{A},\mathbf{R}_0)$ by line 3 of Algorithm \ref{algorithm-5}, and then solve the quaternion linear systems $\widetilde{\mathbf{R}}_{j}\mathbf{y}_j=\mathbf{q}_1(1:j)$ by lines $4-16$ of Algorithm \ref{algorithm-5}.
  \STATE Compute the approximate solution $\mathbf{X}_j=\mathbf{X}_0+\mathbf{\mathcal{V}}_j*\mathbf{y}_j.$
	\end{algorithmic}
\end{breakablealgorithm}
\bigskip


\section{Numerical experiments}\label{section-5}

Far from being exhaustive, we in this section record some numerical results to illustrate the feasibility and superiority of the proposed algorithms when applied to solving Eq.\eqref{1-1} with random data and color image deblurring problems. All experiments were coded in Matlab (2022a) on a PC with Inter(R) Core(TM) i7-12700KF @3.6 GHz and 16.00 GB memory. For the sake of comparison, we are concerned with the proposed algorithms, Gl-GMRES, Gl-FOM \cite{Jbilou} QGMRES\cite{Jia4}, and QFOM\cite{Li0} methods in terms of the elapsed CPU time and iterations. For clarity, we denote the computing time in seconds by “CPU”, the iteration steps by “IT”, and
the relative residual $$\mathbf{RR}=\|\mathbf{B}-\mathbf{A}\mathbf{X}_k\|/\|\mathbf{R}_0\|.$$ Unless otherwise stated, the stopping
 criterion is set to be $\mathbf{RR}<\varepsilon=10^{-6}$,
or the maximum iteration does not exceed $k_{max}:= 3000$.  The symbol $\dagger$ means the relative residual did not reach the required accuracy after $3000$ iterations. In the following tests, the initial guess for all tested algorithms is a zero matrix.

\begin{example}\label{example-1}
{\rm Consider the original quaternion matrix equation \eqref{1-1} with its parameters given by
\begin{equation}\label{5-1}
\begin{aligned}
\mathbf{A}&=A_0+A_1\mathbf{i}+A_2\mathbf{j}+A_3\mathbf{k},\\
\mathbf{B}&=B_0+B_1\mathbf{i}+B_2\mathbf{j}+B_3\mathbf{k},
\end{aligned}
\end{equation}
where $B_i=\mbox{rand}(n,m)$, $i=1,2,3,4$, is an $n\times m$ random matrix with entries uniformly distributed in $[0,1]$, $A_1=-A_0$, $A_2=2*A_0$, $A_3=1.5*A_0$, and $A_0$ from the "MatrixMarket" \footnote{The MatrixMarket is available on https://math.nist.gov/MatrixMarket/} collection is chosen to be $A_0=\textbf{west0067}\ (n=67)$, $\textbf{bcspwr03}\ (n=118)$ and $\textbf{bcsstk05}\ (n=153)$, respectively.}

{\rm  Then we implemented Algorithms \ref{algorithm-4}, \ref{algorithm-5}, Gl-GMRES and Gl-FOM methods using real counterparts, for solving Eq.\eqref{1-1}. The obtained numerical results were reported in Table \ref{Table-1}. As shown in this table, one can see that Algorithms \ref{algorithm-4} and \ref{algorithm-5} both work more effectively than Gl-FOM and Gl-GMRES. Specifically, the number of iteration steps and the elapsed CPU time corresponding to the former two are much less than the latter two. The reason is that the arithmetic operations of Gl-FOM and Gl-GMRES methods for solving Eq.\eqref{1-1} are about four times to Algorithms \ref{algorithm-4} and \ref{algorithm-5} in each iteration step, which is very time-consuming.  In case $n=67,m=3$, the convergence curves of all algorithms were depicted in Figure \ref{fig:1}. The curves in this Figure show that, the convergence behaviour of Algorithm \ref{algorithm-5} performs more reliably than Algorithm \ref{algorithm-4}, even though the computing time to the former is slightly less than the latter.
}
\end{example}
 
\begin{table}[htbp]
	\footnotesize
	\caption{Numerical results of Example \ref{example-1}. \label{Table-1}}
 \renewcommand\arraystretch{1}
	\begin{center}
		\begin{tabular}{|c|c|c|c|c|c|} \hline
			\bf Case &\bf Algorithm &\bf Dimension& \bf IT&\bf CPU&\bf RR \\ 
   &&$[n,m]$ &&& \\ \hline
   &Gl-FOM&$[268,12]$&307&0.9789& 4.8776e-07\\
	\textbf{west0067}&Gl-GMRES& $[268,12]$&307& 0.9894& 4.4517e-07\\ 
			& Algorithm \ref{algorithm-4} &$[67,3]$&184&0.3613&3.5667e-07\\
			& Algorithm \ref{algorithm-5} &$[67,3]$&184&0.3725&  3.4831e-07\\
			\hline
&Gl-FOM&$[472,20]$&780&21.5197&8.3133e-07\\
		\textbf{bcspwr03} &Gl-GMRES& $[472,20]$&781&21.4393& 6.6087e-07\\ 
		& Algorithm \ref{algorithm-4} &$[118,5]$&479&4.8358& 9.7107e-07\\
			& Algorithm \ref{algorithm-5} &$[118,5]$&480&4.9553& 9.7908e-07\\
			\hline
   &Gl-FOM&$[612,36]$&2820&511.1095&8.0738e-07\\
	\textbf{bcsstk05}&Gl-GMRES& $[612,36]$&2821&512.8517& 7.4933e-07\\ 
			 & Algorithm \ref{algorithm-4} &$[153,9]$&985&31.2043&4.4118e-07 \\
			& Algorithm \ref{algorithm-5} &$[153,9]$&986&31.8455&9.3870e-07\\
   \hline
		\end{tabular}
	\end{center}
\end{table}

\begin{figure}[!htbp]
	\hspace{-6em}
	\begin{center}
		\begin{minipage}[c]{0.4\textwidth}
			\includegraphics[width=3in]{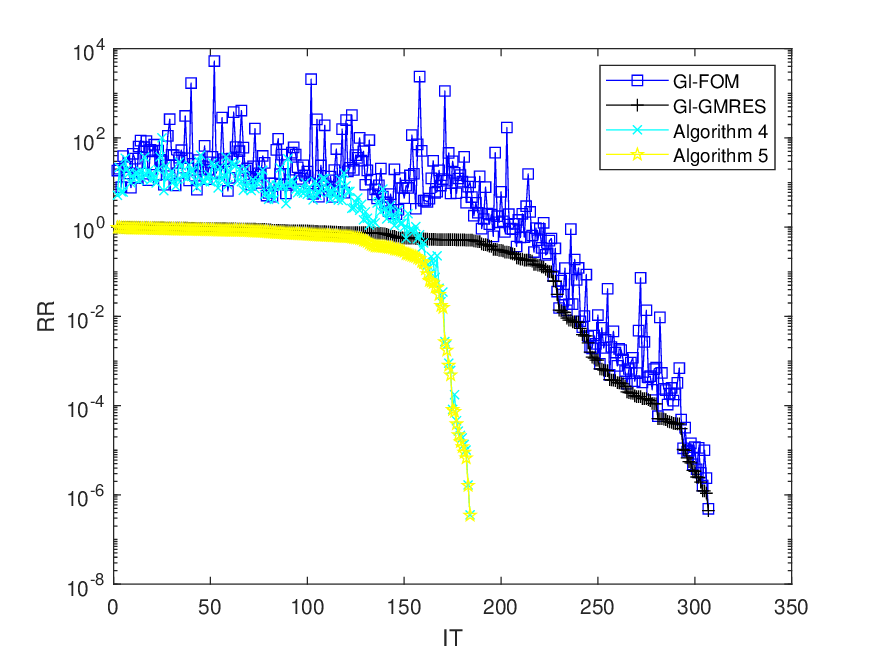}
		\end{minipage}
	\end{center}
	\vspace{-1em}\caption{ Convergence curves of Example \ref{example-1} with $n=67, m=3$.\label{fig:1}}
\end{figure}

\begin{table}[htbp]
	\footnotesize
	\caption{Numerical results of Example \ref{example-2}. \label{Table-2}}
 \renewcommand\arraystretch{1}
	\begin{center}
		\begin{tabular}{|c|c|c|c|c|c|} \hline
			\bf Case &\bf Algorithm &\bf Dimension& \bf IT&\bf CPU&\bf RR \\ 
   &&$[n,m]$ &&& \\ \hline
   &Gl-FOM&$[128,20]$&973& 6.6117& 9.5987e-07\\
	\textbf{ibm32}&Gl-GMRES& $[128,20]$&935&  6.0750& 9.3089e-07\\ 	
 & Algorithm \ref{algorithm-4} &$[32,5]$&127&0.3087&7.4128e-07\\
& Algorithm \ref{algorithm-5} &$[32,5]$&128& 0.3162&  6.8533e-08\\ \hline
 &Gl-FOM&$[340,20]$&2380& 116.4435&9.6364e-07\\
\textbf{ash85}&Gl-GMRES& $[340,20]$&2235& 109.3513& 9.9626e-07\\ 
& Algorithm \ref{algorithm-4} &$[85,5]$&294&14.3012&2.2206e-07\\
			& Algorithm \ref{algorithm-5} &$[85,5]$&295& 14.6479&  8.4313e-07\\ \hline
    &Gl-FOM&$[900,60]$&$\dagger$&$\dagger$&$\dagger$\\
	\textbf{pde225}&Gl-GMRES& $[900,60]$&$\dagger$&$\dagger$& $\dagger$\\ 
 & Algorithm \ref{algorithm-4} &$[225,15]$&804&44.3952&9.4365e-07\\
& Algorithm \ref{algorithm-5} &$[225,15]$&804&  44.7286& 9.4973e-07
\\ \hline
  &Gl-FOM&$[1780,80]$&$\dagger$&$\dagger$&$\dagger$\\
	\textbf{can445}&Gl-GMRES& $[1780,80]$&$\dagger$&$\dagger$& $\dagger$\\ 
			& Algorithm \ref{algorithm-4} &$[445,20]$&2736&514.6701&9.2975e-07\\
			& Algorithm \ref{algorithm-5} &$[445,20]$&2737&  515.8727&9.9578e-07
\\ \hline
		\end{tabular}
	\end{center}
\end{table}

\begin{example}\label{example-2}
{\rm Consider the Sylvester quaternion matrix equation \eqref{4-12} with its coefficient matrices $\mathbf{A}$ and $\mathbf{B}$ being defined as Eq.\eqref{5-1}. Here the real matrices $A_i$ and $B_i$ are chosen to be
$$\begin{aligned}
 a).\ A_0&=\textbf{ibm32}\ (n=32), B_1=2*B_0,\ B_2=-B_0,\ B_3=1.5*B_0,\\ 
 b).\ A_0&=\textbf{ash85}\ (n=85), B_1=B_0,\ B_2=-B_0,\ B_3=-1.5*B_0,\\
 c).\ A_0&=\textbf{pde225}\ (n=225), B_1=B_0,\ B_2=2*B_0,\ B_3=-B_0,\\
d).\ A_0&=\textbf{can445}\ (n=445), B_1=1.5*B_0,\ B_2=B_0,\ B_3=-B_0,\\
\end{aligned}$$
with $$B_0=\left[\begin{array}{ccccccccc}
	2&1&&&\\
	-1&2&1&&\\
   &\ddots&\ddots&\ddots&\\
   &&-1&2&1\\
   &&&-1&2\\
\end{array}\right]$$
of order $m$, and $C_i=\mbox{rand}(n,m)$, $i=1,2,3,4$, is a random matrix of size $n\times m.$
}
\end{example}

{\rm Applying Gl-FOM, Gl-GMRES, Gl-QFOM, and Gl-QGMRES for solving  Eq.\eqref{4-12}, we obtained the numerical results listed in Table \ref{Table-2}. We can see from the table that, all of the algorithms for solving Eq.\eqref{4-12} are feasible except for Gl-FOM and Gl-GMRES with cases \textbf{pde225}, \textbf{can445}, while
Algorithms \ref{algorithm-4} and \ref{algorithm-5} converge in fewer iterations and cost less the computing time than other solvers. Additionally, the performance of Algorithm \ref{algorithm-4} is slightly better than that of Algorithm \ref{algorithm-5}.}

In what follows, we test the effectiveness of the proposed algorithms for solving model \eqref{1-1} applied in color image restoration.  The original color image, denoted by a pure quaternion matrix $\mathbf{X}^*=X_1\mathbf{i}+X_2\mathbf{j}+X_3\mathbf{k}\in \mathbb{Q}^{m\times n}$, consists of $n\times m$ color image pixel values in the range $[0, d]$, where $d=255$ represents the maximum possible pixel value of the image, and $X_1,X_2,X_3\in \mathbb{R}^{n\times m}$ denote the red, green and blue channels, respectively.
The quaternion matrix $\mathbf{B}=\mathbf{A}\mathbf{X}$, i.e., Eq.\eqref{1-1},
denotes the blurred and noise-free image, where $\mathbf{A}\in \mathbb{Q}^{n\times n}$ is the blurring matrix. Here we compare numerically, our algorithms with the QFOM \cite{Li0} and QGMRES \cite{Jia4} methods since they outperform many existing methods in terms of the iterations and the computing time. In fact, for Eq.\eqref{1-1} applied in color image restoration, the QFOM \cite{Li0} and QGMRES always stack the columns of $\mathbf{X}$, such that \eqref{1-1} can be equivalently transformed into the following large-scale quaternion linear systems
\begin{equation}\label{5-2}
\hat{\mathbf{A}}\hat{\mathbf{x}}=\hat{\mathbf{b}},
\end{equation}
where $$\hat{\mathbf{A}}=\left[\begin{array}{ccccccccc}
	\mathbf{A}&&&\\
	&\mathbf{A}&&\\
   &&\ddots&\\
   &&&\mathbf{A}\\
\end{array}\right]$$
is a block diagonal matrix of size $n^2\times n^2$, $\hat{\mathbf{x}}\in \mathbb{Q}^{nm}$ and $\hat{\mathbf{b}}\in \mathbb{Q}^{nm}$ are quaternion vectors obtained by stacking the columns of $\mathbf{X}$ and $\mathbf{B}$. Clearly, the QFOM and QGMRES methods for solving Eq.\eqref{5-2} are very costly compared to Algorithms \ref{algorithm-4} and \ref{algorithm-5} for solving  Eq.\eqref{1-1}. The reason is that the dimension of Eq.\eqref{5-2} is quite large, while the proposed algorithms for solving  Eq.\eqref{1-1} save three-quarters of arithmetic operations and storage space.

The performance of all tested algorithms is evaluated by four measures the computing time ($\mathbf{CPU}$), the peak signal-to-noise ratio ($\mathbf{PSNR}$) in decibel ($\mathbf{dB}$), structural similarity ($\mathbf{SSIM}$) \cite{Wang0101}, relative residual ($\mathbf{RR}$). Define
$$\begin{aligned}
\mathbf{PSNR}(\mathbf{X})&=10\log_{10}(\frac{3mnd^2}{\|\mathbf{X}-\mathbf{X}_k\|^2}),\quad \mathbf{RR}(\mathbf{X})=\frac{\|\mathbf{X}-\mathbf{X}_k\|}{\|\mathbf{X}\|},  \\
\mathbf{SSIM}(\mathbf{X})&=\frac{(2\mu_{\mathbf{X}}\mu_{\mathbf{X}_k}+c_1)(2\sigma_{\mathbf{X}\mathbf{X}_k}+c_2)}{(\mu_{\mathbf{X}}^2+\mu_{\mathbf{X}_k}^2+c_1)(\sigma_{\mathbf{X}}^2+\sigma_{\mathbf{X}_k}^2+c_2)},
\end{aligned}
$$
where $\mu_{(\mathbf{X})}$, $\mu_{(\mathbf{X}_k)}$ and $\sigma_{(\mathbf{X})}$, $\sigma_{(\mathbf{X})_k}$  are the means and variances of  $\mathbf{X}$ and $\mathbf{X}_k$, respectively, $\sigma_{\mathbf{X}\mathbf{X}_k}$ is the covariance between $\mathbf{X}$ and $\mathbf{X}_k$, $c_1=(0.01L)^2, c_2=(0.03L)^2$ with $L$ being the dynamic range of pixel values. Here the stopping
 criterion is $\mathbf{RR}<\varepsilon=10^{-2}$.

\begin{example}\label{example-3}
{\rm In this example, we use two sets of experiments. Consider three original color images, including the "\textbf{Lanhua}", "\textbf{Hongyi}", "\textbf{Peppers}" color images of size $128\times 128$, blurred by two kinds of single channel blurring matrices $\mathbf{A}=\mathbf{A}_0\in \mathbb{R}^{n\times n}$. }

{\rm In the first experiment, the blurring matrix $\mathbf{A}_0=(a_{ij})\in \mathbb{R}^{128\times 128}$ is the Toeplitz matrix \cite{Jbilou0,Jbilou1} with its entries given by
\begin{equation}\label{5-3}
a_{ij}=\left\{\begin{aligned}	
	&\frac{1}{2s-1},\ \ |i-j|\leq s,\\
	&0,\ \ \ \  \mbox{otherwise},
\end{aligned}
\right.
\end{equation}
which models a uniform blur. The parameter $s$ is chosen to be $20$. We implemented the QFOM, QGMRES and the proposed algorithms for solving Eqs.\eqref{5-2} and \eqref{1-1}, respectively, and then reported the numerical results and the restored images in Table \ref{Table-3} and Figure \ref{fig:2}. From this table, Algorithms \ref{algorithm-4} and \ref{algorithm-5} can restore images with higher quality than other solvers and require less computing time, which are up-and-coming methods. Moreover, we can see that the elapsed computing time, PSNR, and SSIM corresponding to Algorithm \ref{algorithm-4} is slightly better than Algorithm \ref{algorithm-5}.
\begin{table}[htbp]
	\footnotesize
	\caption{Numerical results of Example \ref{example-3} with uniform blur. \label{Table-3}}
 \renewcommand\arraystretch{1}
	\begin{center}
		\begin{tabular}{|c|c|c|c|c|c|} \hline
			\bf Case &\bf Algorithm &\bf PSNR & \bf SSIM &\bf CPU&\bf RR \\  \hline
   &QFOM&32.5307&0.9304&244.7376&4.7165e-02\\
\textbf{Lanhua}&QGMRES&32.5185&0.9254&245.6593&4.8013e-02\\ 
   &Algorithm \ref{algorithm-4} &36.0217&0.9728&18.8159& 3.1795e-02\\
	& Algorithm \ref{algorithm-5}&36.0186&0.9716&  19.1750& 3.1842e-02\\ 	
 \hline
 &QFOM&30.8243&0.9454&235.7022&4.3156e-02\\
\textbf{Hongyi}&QGMRES&30.8189&0.9430&236.4835&4.3213e-02\\ 
   &Algorithm \ref{algorithm-4} &32.3657&0.9774&17.8476& 2.4218e-02\\
	& Algorithm \ref{algorithm-5}&32.3396&0.9729&  18.3281& 2.4637e-02\\ 	
 \hline
 &QFOM&33.9412&0.9389&256.3754&4.2516e-02\\
\textbf{Peppers}&QGMRES&33.9275&0.9327&257.4598&4.2749e-02\\ 
   &Algorithm \ref{algorithm-4} &37.6389&0.9761&19.9556& 2.6186e-02\\
	& Algorithm \ref{algorithm-5}&37.6324&0.9712&  20.0189& 2.6237e-02\\ 	
 \hline
		\end{tabular}
	\end{center}
\end{table}
\begin{figure}[!htbp]
	\hspace{-6em}
	\begin{center}
		\begin{minipage}[c]{1.2\textwidth}
			\includegraphics[width=6in]{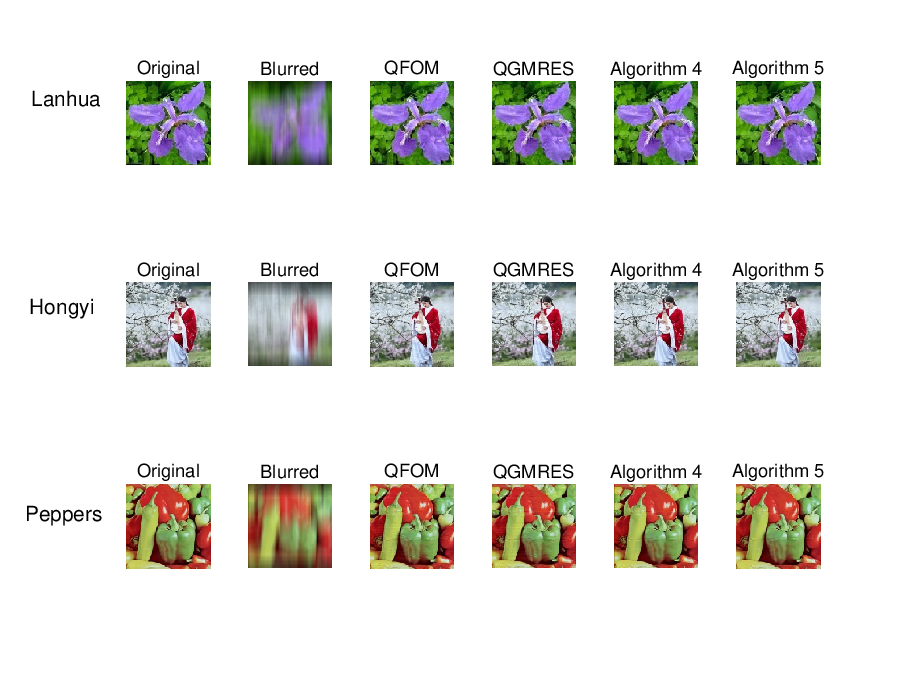}
		\end{minipage}
	\end{center}
	\vspace{-5em}\caption{Left to Right: the original images of size $128\times 128$, the observed images with uniform blur, the deblurred images obtained by QFOM and QGMRES, Algorithms \ref{algorithm-4} and \ref{algorithm-5}, respectively.\label{fig:2}}
\end{figure}

In the second experiment,  the blurring matrix $\mathbf{A}_0\in \mathbb{R}^{128\times 128}$ is also the Toeplitz matrix \cite{Jbilou2,Li1212} with its entries given by
\begin{equation}\label{5-4}
a_{ij}=\left\{\begin{aligned}	
&\frac{1}{\sigma\sqrt{2\pi}}\exp(-\frac{(i-j)^2}{2\sigma^2}),\ \ |i-j|\leq r,\\
&0,\ \ \ \  \mbox{otherwise},
\end{aligned}
\right.
\end{equation}
where the parameters $r=35$ and $\sigma=10$. It models a blur arising in connection with the degradation of digital images by atmospheric turbulence blur. Similarly, we ran all the tested algorithms for solving Eqs.\eqref{5-2} and \eqref{1-1}. The obtained numerical results were recorded in Table \ref{Table-4}, and the restored colour images by the algorithms were shown in Figure \ref{fig:3}. As seen, the proposed algorithms still outperform QFOM and QGMRES regarding the elapsed computing time, PSNR and SSIM.}
\end{example}

\begin{table}[htbp]
	\footnotesize
	\caption{Numerical results of Example \ref{example-3} with atmospheric
turbulence blur. \label{Table-4}}
 \renewcommand\arraystretch{1}
	\begin{center}
		\begin{tabular}{|c|c|c|c|c|c|} \hline
			\bf Case &\bf Algorithm &\bf PSNR & \bf SSIM &\bf CPU&\bf RR \\  \hline
   &QFOM&27.1721&0.8752&274.4323&8.8278e-02\\
  \textbf{Lanhua} &QGMRES&27.1646&0.8714&275.0952&8.8356e-02\\ 
   &Algorithm \ref{algorithm-4}  &30.8217&0.9412&21.6829&5.6793e-02\\ 
   &Algorithm \ref{algorithm-5} &30.8135&0.9397&22.1847&5.7725e-02\\  \hline
     &QFOM&26.7325&0.8937&580.5726&7.1253e-02\\ 
  \textbf{Hongyi} &QGMRES&26.7194&0.8925&581.5637&7.2568e-02\\ 
   &Algorithm \ref{algorithm-4} &30.2372&0.9542&36.1186&4.7273e-02\\ 
   &Algorithm \ref{algorithm-5} &30.2154&0.9516&36.4924&4.7925e-02\\ \hline
     &QFOM&27.7925&0.8847&282.3236&9.1373e-02\\ 
  \textbf{Peppers} &QGMRES&27.7428&0.8749&283.3960&9.1128e-02\\ 
   &Algorithm \ref{algorithm-4} &31.1406&0.9423&23.1272&5.4579e-02\\ 
   &Algorithm \ref{algorithm-5} &31.1179&0.9401&23.9685&5.4154e-02\\ \hline
		\end{tabular}
	\end{center}
\end{table}
\begin{figure}[!htbp]
	\hspace{-6em}
	\begin{center}
		\begin{minipage}[c]{1.2\textwidth}
			\includegraphics[width=6in]{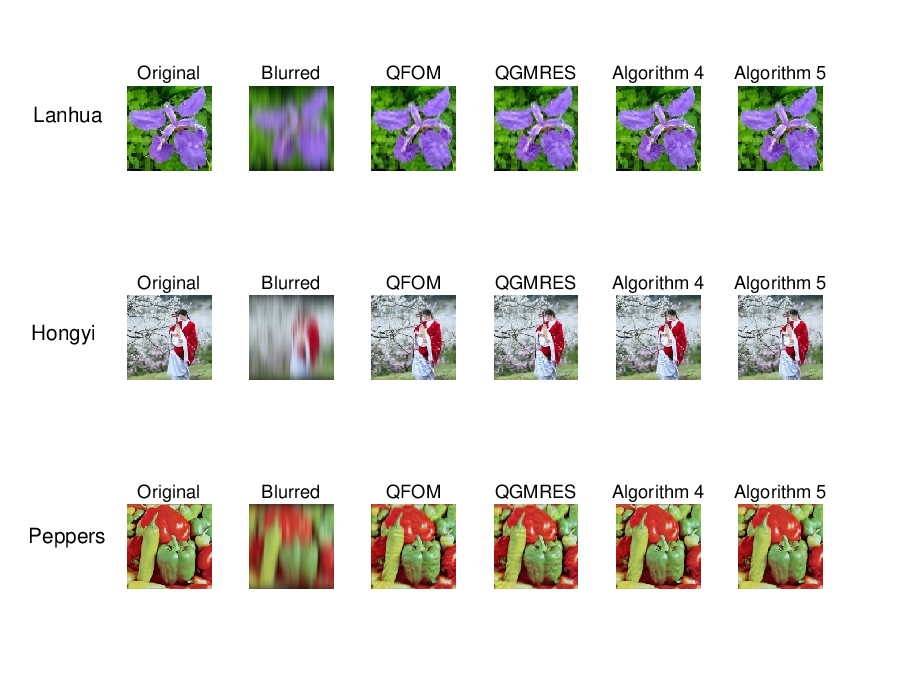}
		\end{minipage}
	\end{center}
	\vspace{-5em}\caption{ Left to Right: the original images of size $128\times 128$, the observed images with atmospheric turbulence blur, the deblurred images obtained by QFOM and QGMRES, Algorithms \ref{algorithm-4} and \ref{algorithm-5}, respectively.\label{fig:3}}
\end{figure}

\begin{table}[htbp]
	\footnotesize
	\caption{Numerical results of Example \ref{example-4} with Multichannel. \label{Table-5}}
 \renewcommand\arraystretch{1}
	\begin{center}
		\begin{tabular}{|c|c|c|c|c|c|} \hline
			\bf Case &\bf Algorithm &\bf PSNR & \bf SSIM &\bf CPU&\bf RR \\  \hline
   &QFOM&28.2987&0.8852&305.7253&7.3693e-02\\
  \textbf{Lena} &QGMRES&28.2869&0.8814&306.5478&7.3869e-02\\ 
   &Algorithm \ref{algorithm-4}  &29.1257&0.9284&27.1429&6.7995e-02\\ 
   &Algorithm \ref{algorithm-5} &29.1168&0.9238&27.5369&6.8761e-02\\  \hline
   &QFOM&26.3089&0.8662&472.3548&9.5471e-02\\
  \textbf{Peppers} &QGMRES&26.2946&0.8628&473.2271&9.5638e-02\\ 
   &Algorithm \ref{algorithm-4}  &27.1036&0.9115&35.9783&8.6315e-02\\ 
   &Algorithm \ref{algorithm-5} &27.0954&0.9102&36.3629&8.6737e-02\\  \hline
		\end{tabular}
	\end{center}
\end{table}
\begin{figure}[!htbp]
	\hspace{-6em}
	\begin{center}
		\begin{minipage}[c]{1\textwidth}
			\includegraphics[width=6in]{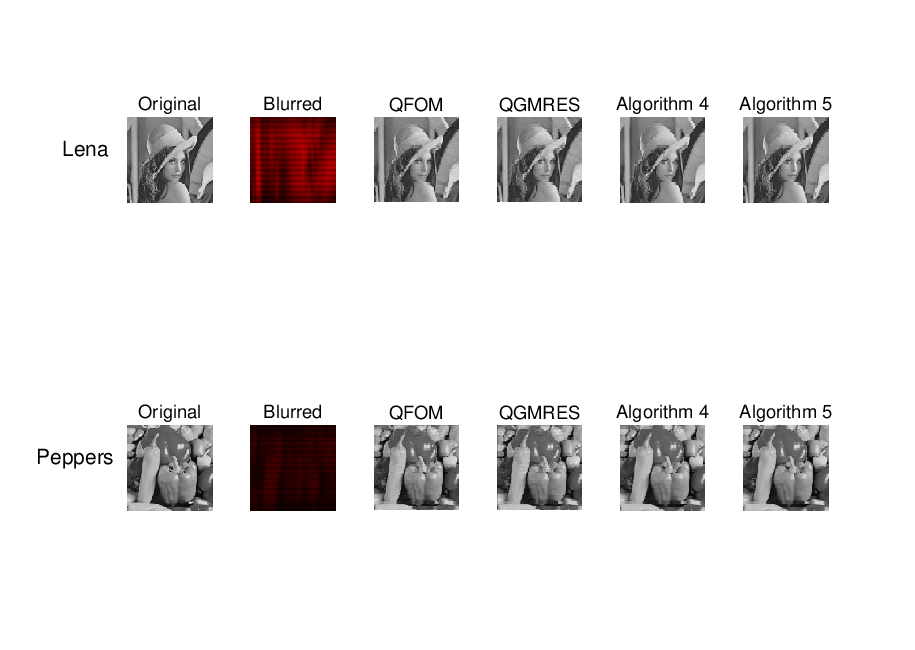}
		\end{minipage}
	\end{center}
	\vspace{-5em}\caption{  Left to Right: the original images of size $128\times 128$, the observed images with multichannel blur, the deblurred images obtained by QFOM, QGMRES, Algorithms \ref{algorithm-4} and \ref{algorithm-5}, respectively.\label{fig:4}}
\end{figure}
\begin{example}\label{example-4}
{\rm In this example, we consider two original colour images, including the "\textbf{Lena}", "\textbf{Peppers}" colour images of size $128\times 128$, blurred by a multichannel blurring matrix $\mathbf{A}=\mathbf{A}_1\mathbf{i}+\mathbf{A}_2\mathbf{j}+\mathbf{A}_3\mathbf{k}\in \mathbb{Q}^{n\times n}$. Note that the images mentioned above are colour images with the same red, green, and blue channels, despite seeming like grey-scale images. Let $\mathbf{H}_0\in\mathbb{R}^{16\times 16}$ defined in \eqref{5-4} be the Toeplitz matrix with $r=\sigma=3$, and let $\mathbf{H}_1\in\mathbb{R}^{8\times 8}$ defined in \eqref{5-3} be the Toeplitz matrix with $s=5$. Define $\mathbf{A}_1=\mathbf{H}_0\otimes \mathbf{H}_1\in \mathbb{R}^{128\times 128}$, $\mathbf{A}_2=\mathbf{A}_3=-0.5\mathbf{A}_1$. 
}

{\rm We applied the QFOM, QGREMS, and Algorithms \ref{algorithm-4} and \ref{algorithm-5} to deal with the restoration problem. We reported the numerical results and
the restored colour images by the above algorithms in Table \ref{Table-5} and Figure \ref{fig:4}, respectively. We see from Table \ref{Table-5} that the restoration quality by Algorithms \ref{algorithm-4} and \ref{algorithm-5} is still better than that by  QFOM and QGREMS in terms of PSNR and SSIM, especially in the computing time.}
\end{example}

\section{Conclusion}\label{section-6}
In this paper, we proposed the Gl-QFOM and Gl-GMRES methods for solving quaternion linear systems with multiple
right-hand sides. The main contributions are listed as follows.
 \begin{itemize}
\item The global quaternion Arnoldi method, which inherits the algebraic symmetry of the real counterpart, is first proposed. On this basis, two structure-preserving methods for overcoming the multiplicative noncommutativity of quaternions, including the Gl-QFOM and Gl-GMRES, are developed for solving quaternion linear systems with multiple right-hand sides.
\item Gl-QFOM and Gl-QGMRES are novel methods presented for solving the Sylvester tensor equation over the quaternion algebra, and outperform some existing solvers in terms of the computing time and iterations.
\item Gl-QFOM and Gl-QGMRES are successfully applied to the color image processing and require much less time than QFOM and QGMRES.
\end{itemize}
In the future, we are interested in developing the preconditioned Gl-QFOM and Gl-QGMRES for other applications.

\bigskip

\textbf{Contributions:} {All authors have equal contributions for Conceptualization, Formal analysis, 
Investigation, Methodology, Software, Validation, Writing an original draft, Writing a review, and 
Editing. All authors have read and agreed to the published version of the manuscript.}

\bigskip
\textbf{Data Availability Statement:} {The data used to support the findings of this study are included within this article.}
\bigskip

\textbf{Conflict of Interest:} {The authors declare no conflict of interest.}

\end{document}